\documentclass[reqno,oneside,11pt]{amsart}

\usepackage{amsmath, amssymb,epsfig,verbatim}
\usepackage[all,dvips,arc,curve,frame]{xy}
\usepackage{graphics,epic,eepic} 
\usepackage{enumerate}
\usepackage[mathcal]{euscript}
\usepackage{pslatex} 


\theoremstyle{plain}

\newtheorem{thm}{Theorem}

\newtheorem{pro}[thm]{Proposition}
\newtheorem{lem}[thm]{Lemma}

\theoremstyle{definition}

\newtheorem{rem}[thm]{Remark}
\newtheorem*{note}{Note}

\newtheorem{mino}[thm]{Minoration}

\newtheorem{IH}[thm]{Induction Hypothesis}



\def\C{\mathbb{C}}

\def\N{\mathbb{N}}

\def\Af{\mathbb{A}}

\def\A{{\mathcal{A}}}
\def\E{{\mathcal{E}}}


\def\Aut{{\sf{Aut}}}

\def\cpo{{\mathbb{P}^1}}

\def\cpt{{\mathbb{P}^3}}
\def\cpq{{\mathbb{P}^4}}
\def\id{\text{\rm id}}


\def\PSL{{\sf{PSL}}\,}

\def\SL{{\sf{SL}}\,}

\def\SO{{\sf{SO}}\,}
\def\PSO{{\sf{PSO}}\,}
\def\Oq{{\sf{O}_4(\C)}\,}

\def\wght{{\textbf{\rm w}}}

\def\gen{^{\textbf{\rm gen}}}

\newcommand{\jj}{\,{\rm j}\,}

\newcommand{\bdeg}{\deg\,}
\newcommand{\olddeg}{\deg_{\C^4}\,}
\newcommand{\ged}{{\rm ged}\,}

\renewcommand{\hom}[1]{{#1}^\wght}
\renewcommand{\part}[2]{\tfrac{\partial #2}{\partial x_{#1}}}

\newcommand{\Et}{\E^{12}}
\newcommand{\Eb}{\E_{34}}
\newcommand{\Er}{\E^2_4}
\newcommand{\El}{\E^1_3}

\newcommand{\mat}[4]{\left( \begin{smallmatrix} #1\,&#2\\ #3\,&#4 \end{smallmatrix} \right)}
\newcommand{\bmat}[4]{\begin{pmatrix} #1\,&#2\\ #3\,&#4 \end{pmatrix}}


\setlength{\textwidth}{13.4cm}                       
\setlength{\textheight}{20.5cm}                     
\setlength{\topmargin}{0.05cm}                     
\setlength{\headheight}{0.6cm}                     
\setlength{\headsep}{0.9cm}                         
\setlength{\oddsidemargin}{1.5cm}                
\setlength{\evensidemargin}{1.5cm}              


\addtocounter{section}{0}             


\usepackage[colorlinks, linktocpage, 
citecolor = magenta, linkcolor = magenta]{hyperref}

\begin{document}

\title[Tame automorphisms of an affine quadric threefold]
{The tame and the wild automorphisms of an affine quadric threefold}
\date{}
\author{St\'ephane Lamy}
\address{Mathematics Institute\\
        University of Warwick \\
        Coventry \\
        United Kingdom }
\email{s.lamy@warwick.ac.uk}
\address{{\rm Permanent address: } Institut de Math\'ematiques de Toulouse \\ Universit\'e Paul Sabatier \\ 118 route de Narbonne \\ 31062 Toulouse Cedex 9\\ France}
\email{slamy@math.univ-toulouse.fr}
\thanks{The first author was supported by a Marie Curie Intra European Fellowship, and was on leave from Institut Camille Jordan,  Universit\'e Lyon 1, France.}
\author{St\'ephane V\'en\'ereau}
\address{Mathematisches Institut \\ 
Universit\"at Basel \\
Rheinsprung 21 \\
CH-4051 Basel \\
Switzerland}
\email{stephane.venereau@unibas.ch}
\maketitle

\begin{abstract}
 We generalize the notion of a tame automorphism to the context of an affine quadric threefold and we prove that there exist non-tame automorphisms.
\end{abstract}


\section{Introduction}

A landmark result about the automorphism group of the complex affine space $\Af^3$ is the proof by Shestakov and Umirbaev \cite{SU:main} that there exist some wild automorphisms in $\Aut(\Af^3)$, which are defined as automorphisms that cannot be written as a composition of finitely many triangular and affine automorphisms.
Since then some technical aspects of the proof have been substantially simplified and generalized (see \cite{MY, ku:ineq, Ku:main, Ve}); however we feel that we still lack a full understanding of why the proof works. 

In this note we try to gain insights on the problem by transposing the question to another affine threefold, namely the underlying variety of $\SL_2(\C)$.
Note that if $Q^3 \subset \cpq$ is a smooth projective quadric, and $V = Q^3 \smallsetminus H$ is the complement of a hyperplane section, then $V$ is either isomorphic to $\SL_2(\C)$, or to $\Af^3$ (if the hyperplane $H$ was tangent to $Q^3$). 

Another reason to think that $\Aut(\SL_2(\C))$ should be a close analogue to $\Aut(\Af^3)$ comes from the dimension 2.
If we repeat the previous construction for a smooth quadric surface $Q^2 \simeq \cpo \times \cpo \subset \cpt$, we obtain either an affine quadric isomorphic to $\{y^2 - xz = 0 \}$ or the affine plane $\Af^2$.
The automorphism groups of these affine surfaces are well-known \cite{ML, Lam}: both admit presentations as amalgamated products over two factors, and it is not clear how to point out any qualitative difference between both situations.

The story becomes more interesting in dimension 3, and this is the main point of this note: we claim that the group  $\Aut(\SL_2(\C))$, even if still huge, is in some sense more rigid than $\Aut(\Af^3)$.
It is quite straightforward to define a natural notion of elementary automorphism in the new context, hence also a notion of tame automorphism.
By contrast with the situation of $\Af^3$, it is possible to prove that any tame automorphism admits an elementary reduction, the reduction concerning the degree of the automorphism.
In particular, there is no need to adapt the notion of (non-elementary) reductions of type I-IV of Shestakov and Umirbaev, or of ``Shestakov-Umirbaev reduction'' which are their counterparts in the terminology of Kuroda.

As a consequence, we are able to give a self-contained short proof of the existence of wild automorphisms in $\Aut(\SL_2(\C))$.
This might indicate that $\SL_2(\C)$ is a good toy model to test any attempt for an alternative, hopefully more geometric proof of the result of Shestakov and Umirbaev, which would work in positive characteristic.

The paper is organized as follows.

In Section \ref{sec:tame}, we introduce the tame group of $ \SL_2(\C)$ together with some technical definitions, and we state our main result.

Section \ref{sec:para} is devoted to a proof of a version of the ``parachute'' inequality, which was already a key ingredient in the case of $\Af^3$.

Then in Section \ref{sec:proof} we are able to give a short proof of the main result.

As a consequence we can easily produce some wild automorphisms on $\SL_2(\C)$: this is done in Section \ref{sec:wild}.
Note that another natural generalization of $\Af^3$ would be to consider the complement of a smooth quadric in $\cpt$ (since $\Af^3$ is the complement of a plane).
This gives rises to the underlying variety of $\PSL_2(\C)$. 
We indicate at the end of the paper how to adapt our construction to this case.

\begin{note}
After this work was completed, Ivan Arzhantsev and Sergey Ga{\u\i}fullin kindly indicated to us the existence of their work \cite{AG}. 
In their \S 6 a wild automorphism on the 3-dimensional quadric affine cone is produced. 
The example is essentially the same as ours but the techniques involved in the proof are quite different; in particular they do not rely on a generalization of the Shestakov-Umirbaev theory, which is our main point. 
\end{note}


\section{The tame group of $\SL_2(\C)$} \label{sec:tame}


\subsection{Elementary automorphisms}

We work over the field of complex numbers $\C$.

As mentioned in the introduction we find convenient to identify $\Af^4$ with the space of 2 by 2 matrices, and to choose our smooth affine quadric to be given by the determinant $q = x_1x_4 - x_2x_3$:
$$\SL_2(\C) = \left\lbrace \begin{pmatrix} x_1 & x_2 \\ x_3 & x_4 \end{pmatrix};\; x_1x_4-x_2x_3 = 1 \right\rbrace.$$

The group structure on  $\SL_2(\C)$ will be useful to describe some automorphisms of the underlying variety, but is by no mean essential. 

An automorphism $F$ of $\SL_2(\C)$ is given by the restriction of an endomorphism on $\Af^4$  
$$(x_1,\dots,x_4) \mapsto (f_1, \dots, f_4)$$
where $f_i \in \C[x_1,x_2,x_3,x_4]$. 
Note that the $f_i$'s are only defined up to the ideal $(q-1)$, and that we do not assume \textit{a priori} that $(f_1, \dots, f_4)$ define an automorphism of $\Af^4$.
We usually simply write 
$$F = \bigl( \begin{smallmatrix} f_1&f_2\\ f_3&f_4 \end{smallmatrix} \bigr).$$

The composition of two automorphisms $F$ and $G$ is denoted $F\circ G$, and should not be confused with the matrix multiplication we use in the definitions below.
A word of warning: even if the terminology we are about to introduce is inspired by \cite{Ku:main}, we differ from Kuroda in one crucial point: we consider automorphisms of an affine variety, and not of the corresponding algebra. As a consequence, our composition $F\circ G$ would be denoted $G\circ F$ by Kuroda...

Recall that an elementary automorphism in the context of the affine space $\Af^3$ is an automorphism of the form $(x_1,x_2,x_3) \mapsto (x_1 + P(x_2,x_3), x_2,x_3)$, up to permutation of the variables.
A natural generalization in the context of $\SL_2(\C)$ is to consider automorphisms preserving two coordinates in the matrix.
One can obtain such automorphisms by multiplication by a triangular matrix: for instance if $h \in \C[x_1,x_2]$ then we can consider automorphisms of the form
$$
\begin{pmatrix} x_1 & x_2 \\ 
x_3 + x_1h(x_1,x_2) & x_4 + x_2h(x_1,x_2) \end{pmatrix}
= \begin{pmatrix} 1 & 0 \\ 
h(x_1,x_2) & 1  \end{pmatrix}
\begin{pmatrix} x_1 & x_2 \\ x_3 & x_4 \end{pmatrix}\; .
$$

It turns out to be useful to allow some coefficients; so we shall say that 
\begin{equation*}
\Eb = \left\lbrace \begin{pmatrix} x_1/a & x_2/b \\ 
b x_3 + b x_1h(x_1,x_2) & a x_4 + a x_2h(x_1,x_2) \end{pmatrix} ;\;
a,b \in \C^*, h \in \C[x_1,x_2] \right\rbrace 
\end{equation*}
is the group of elementary automorphisms of type $\Eb$.

One can make a similar construction multiplying on the right and/or using an upper triangular matrix. One then obtains three other types of elementary automorphisms:
\begin{align*}
\Et &= \left\lbrace \begin{pmatrix} ax_1 + ax_3h(x_3,x_4) & bx_2+bx_4h(x_3,x_4) \\ 
x_3/b  & x_4/a  \end{pmatrix} ;\;
a,b \in \C^*, h \in \C[x_3,x_4] \right\rbrace 
\\ 
\Er &= \left\lbrace \begin{pmatrix} x_1/a & bx_2+bx_1h(x_1,x_3) \\ 
x_3/b  & ax_4 +ax_3h(x_1,x_3) \end{pmatrix} ;\;
a,b \in \C^*, h \in \C[x_1,x_3] \right\rbrace 
\\ 
\El &= \left\lbrace \begin{pmatrix} ax_1 + ax_2h(x_2,x_4) & x_2/b \\ 
bx_3 +bx_4h(x_2,x_4) & x_4/a \end{pmatrix} ;\;
a,b \in \C^*, h \in \C[x_2,x_4] \right\rbrace \; .
\end{align*}

The union of these four groups is the set (not the group!) of \textbf{elementary automorphisms}, denoted by
$$\E = \Et \cup \Eb \cup \Er \cup \El\; .$$


\subsection{Affine automorphisms}

If a linear endomorphism $F$ of $\Af^4$ induces an automorphism on $\SL_2(\C)$, then by homogeneity of $q$ we see that $F$ preserves all levels of the determinant, which is a non-degenerate quadratic form on $\Af^4$:
$q \circ F = q.$
In particular $F$ is an element of the complex orthogonal group  $\Oq$.
Note that these automorphisms are exactly the ones that extend biregularly to the natural compactification of $ \SL_2(\C) \subset \Af^4$ as a smooth quadric in $\cpq$.
So in this sense $\Oq$ plays the same role as the affine group for $\Af^3$.

It is a classical fact (see \cite[page 274]{FH}) that $\PSO_4(\C)$ is isomorphic to $\PSL_2(\C) \times \PSL_2(\C)$. 
We can explicitely recover this isomorphism in our setting by looking at the action of $\SL_2(\C) \times \SL_2(\C) / (-\id, -\id)$ on $\SL_2(\C)$ by multiplication on both sides:
\begin{multline*}  
\bmat{a}{b}{c}{d} \bmat{x_1}{x_2}{x_3}{x_4} \bmat{a'}{b'}{c'}{d'}
= \bmat{a}{b}{c}{d} \bmat{a'x_1+c'x_2}{b'x_1+d'x_2}{a'x_3+c'x_4}{b'x_3+d'x_4}\\
= \bmat{aa'x_1+ac'x_2+ba'x_3+bc'x_4}{ab'x_1+ad'x_2+bb'x_3+bd'x_4}{ca'x_1+cc'x_2+da'x_3+dc'x_4}{cb'x_1+cd'x_2+db'x_3+dd'x_4}\; .
\end{multline*}
This gives an embedding of $\SO_4(\C)$ into $\Aut(\SL_2(\C))$, and adding the transpose automorphism
$$ \bmat{x_1}{x_3}{x_2}{x_4}$$ we recover the whole complex orthogonal group $\Oq$.


\subsection{Tame and wild automorphisms}
\label{intro:tame}

We define the \textbf{tame subgroup} of $\Aut(\SL_2(\C))$ as the group generated by elementary automorphisms and $\Oq$.
An element of $\Aut(\SL_2(\C))$ is called \textbf{wild} if it is not tame.


Composing an element from $\Eb$ and another from $\El$ we construct the tame automorphism
\begin{equation*}
\begin{pmatrix} x_1 -x_2h(x_2) & x_2 \\ 
x_3 + (x_1-x_4)h(x_2) -x_2h^2(x_2)& x_4 + x_2h(x_2) \end{pmatrix}
= \begin{pmatrix} 1 & 0 \\ 
h(x_2) & 1  \end{pmatrix}
\begin{pmatrix} x_1 & x_2 \\ x_3 & x_4 \end{pmatrix}
\begin{pmatrix} 1 & 0 \\ 
-h(x_2) & 1  \end{pmatrix}\; .
\end{equation*}

This automorphism is the exponential of the locally nilpotent derivation $ h(x_2)\partial$ where
$$\left\lbrace 
\begin{array}{ccl}
\partial x_1 &=& -x_2 \\
\partial x_2 &=& 0 \\
\partial x_3  &=& x_1-x_4 \\
\partial x_4 &=& x_2 
\end{array}
\right.$$

Note that not only $x_2$ but also the trace $x_1+x_4$ is in the kernel of $\partial$. 
In particular, taking the exponential of $(x_1+x_4)\partial$ we obtain the automorphism
 \begin{equation*}
\sigma = 
\begin{pmatrix} x_1 -x_2(x_1+x_4) & x_2 \\ 
x_3 + (x_1-x_4)(x_1+x_4) -x_2(x_1+x_4)^2 & x_4 + x_2(x_1+x_4) \end{pmatrix}\; .
\end{equation*}

As a consequence of our main result stated below we prove in \S \ref{sec:wild} that $\sigma$ is a wild automorphism.


\subsection{A degree on $\C[x_1,x_2,x_3,x_4]$}

For $f \in \C[x_1,x_2,x_3,x_4] \smallsetminus \{0\}$, we define the degree of $f$ as an element of $\N^3$ by taking 
$$\olddeg x_1^ix_2^jx_3^kx_4^l = (i,j,k,l)
\left(
  \begin{smallmatrix}
    1  & 0 & 0 \\ 
    0 & 1 & 0 \\ 
    1 & 0 & 1 \\ 
    0 & 1 & 1
  \end{smallmatrix}
\right)$$
and using the graded lexicographic order on $\N^3$: we first compare the sums of the coefficients and, in case of a tie, apply the lexicographic order. 
So (recall that $q$ is the determinant, defining the affine quadric)
$$(1,1,1) = \olddeg x_1x_4=\olddeg x_2x_3 = \olddeg q.$$ 
By convention $\deg 0 = -\infty$, with $-\infty$ smaller than any element of $\N^3$.
The leading part  of a polynomial $$p=\sum_{(i,j,k,l)}p_{i,j,k,l}x_1^ix_2^jx_3^kx_4^l\in\C[x_1,x_2,x_3,x_4]$$
will be denoted $\hom{p}$, hence 
$$
\hom{p}=\sum_{\olddeg x_1^ix_2^jx_3^kx_4^l=\olddeg p } p_{i,j,k,l}x_1^ix_2^jx_3^kx_4^l\; .
$$
Remark that $\hom{p}$ is not in general a monomial; for instance $\hom{(q-1)}=\hom{q} = q$.
The notation $^\wght$, for weight,  is borrowed from Kuroda, and intends to recall that the leading part is relative to the particular choice of weights $\wght = \left(
  \begin{smallmatrix}
    1  & 0 & 0 \\ 
    0 & 1 & 0 \\ 
    1 & 0 & 1 \\ 
    0 & 1 & 1
  \end{smallmatrix}
\right)$ we made (note that to recover the notation of \cite{Ku:main} one has to transpose this matrix).

\subsection{A degree on $\C[\SL_2(\C)]$}

We are not so much interested by the degree of elements inside $\C[x_1,x_2,x_3,x_4]$ but more in  the quotient by the ideal $(q-1)$ which corresponds to $\C[\SL_2(\C)]$.  To do this, we use a classical trick (see \cite{KML,Z}) starting from the global $\olddeg$: we define the desired degree, simply denoted $\deg$, as follows:
if $\bar{f}$ is the class of a polynomial $f$ in $\C[x_1,x_2,x_3,x_4]/(q-1)$ then we set 
$$
 \bdeg \bar f = \min \{ \olddeg g; \, \bar{f} = \bar{g}\}\; .
$$ 

Remark that if $\bar p=\bar f$ then the following equivalence holds:
$$\deg \bar f=\olddeg p\Leftrightarrow p^{\rm w}\notin (q)\setminus\{0\} $$
and we will call  such a $p$ a {\bf good representative of } $\bar f$. Let us now check that $\deg$ is in turn a degree function i.e. that 
\begin{itemize}
\item $\deg \bar f=-\infty\Leftrightarrow \bar f=0$, 
\item $\deg (\bar f_1+\bar f_2 )\leq\max\{\deg \bar f_1,\deg \bar f_2\}$,
\item $\deg \bar f_1\bar f_2=\deg \bar f_1+\deg \bar f_2$, $\forall \bar f_1,\bar f_2\in \C[x_1,x_2,x_3,x_4]/(q-1)$. 
\end{itemize} 

The first equivalence is easy. For the two other facts, we pick $p_1$ and $p_2$ good representatives of the $\bar f_i$'s. One has $\overline {p_1+p_2}=\bar f_1+\bar f_2$ hence, by definition of $\deg$, one has
$$
\deg (\bar f_1+\bar f_2) \leq \olddeg p_1+p_2\leq\max\{\olddeg p_1,\olddeg p_2\}=\max\{\deg \bar f_1,\deg \bar f_2\}.
$$ 

As for the third equality, it suffices to prove that $p_1p_2$ is a good representative of $\bar f_1\bar f_2$ i.e. that $\hom{(p_1p_2)}\notin   (q)$. This is the case since $\hom{p_1},\hom{p_2}\notin (q)$ and $(q)$ is a prime ideal.\\

 We also need to define the leading part of an element of $\C[\SL_2(\C)]$. By abuse of notation, we still denote this by $\empty^\wght$, and define it as follows:
$$
  {\bar f}^\wght=p^\wght + (q)\in \C[x_1,x_2,x_3,x_4]/(q)\mbox{ where $p$ is a good representative of }\bar f\; .
$$
Remark that, in contrast with $\olddeg$ on $\C[x_1,x_2,x_3,x_4]$, the elements $\bar f$ and $\hom{\bar f}$ do not belong to the same set anymore: 
$$\bar f\in\C[x_1,x_2,x_3,x_4]/(q-1) \text{ whereas } \hom{\bar f}\in\C[x_1,x_2,x_3,x_4]/(q).$$
One has to check that the definition is independent of the choice of the good representative. Let us take  two good representatives $p_1$, $p_2$ of the same $\bar f\in\C[\SL_2(\C)]$, then $p_2-p_1\in(q-1)$ whence $(p_2-p_1)^\wght\in (q)$. But $(p_2-p_1)^\wght=p_2^\wght$, $-p_1^\wght$ or $p_2^\wght-p_1^\wght$  and since $p_1^\wght,p_2^\wght\notin(q)$ only the last one is possible, thereby giving: $p_2^\wght-p_1^\wght\in (q)$. 
 
From now on, we drop the bars  and work directly with regular functions on $\SL_2(\C)$. 
So for example, $x_1,x_2,x_3,x_4$ should be understood as their restrictions to $\SL_2(\C)$.


\subsection{Elementary reductions and main result}

If $f_1, \dots, f_4$ are elements in $\C[\SL_2(\C)]$ such that $F = \bigl( \begin{smallmatrix} f_1&f_2\\ f_3&f_4 \end{smallmatrix} \bigr) \in \Aut(\SL_2(\C))$, we define $\bdeg F = \sum \bdeg f_i$.

We say that $E\circ F$ is an \textbf{elementary reduction} of $F$ if $E \in \E$ and $\bdeg E\circ F < \bdeg F$. 

We denote by $\A$ the set of tame automorphisms that admit a sequence of elementary reductions to an element of $\Oq$.

The main result of this note is then:

\begin{thm} 
\label{thm:main}
Any tame automorphism of $\SL_2(\C)$ is an element of $\A$.
\end{thm}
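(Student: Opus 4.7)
My plan is a Noetherian induction on $\bdeg F\in\N^3$ (well-ordered by the graded lexicographic order). Since elements of $\Oq$ lie in $\A$ via the empty reduction sequence, it suffices to prove the following:

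\smallskip
\textbf{Key Claim.} \emph{If $F$ is tame and $F\notin\Oq$, then $F$ admits an elementary reduction.}
\smallskip

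Granted the Key Claim, for any such reduction $E\circ F$ one has $E\circ F$ tame with $\bdeg(E\circ F)<\bdeg F$, hence $E\circ F\in\A$ by the inductive hypothesis, and prepending the step $F\to E\circ F$ to a reduction sequence of $E\circ F$ shows $F\in\A$.

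To prove the Key Claim I would take a tame expression $F=E_n\circ\cdots\circ E_1\circ O$ of minimal length $n\geq 1$ with each $E_i\in\E$ and $O\in\Oq$, and set $G:=E_{n-1}\circ\cdots\circ E_1\circ O$, so that $F=E_n\circ G$. If $\bdeg G<\bdeg F$ then $E_n^{-1}\in\E$ is already an elementary reduction of $F$ and we are done. The hard case is $\bdeg G\geq\bdeg F$: here composition with $E_n$ has not raised the degree, which forces a cancellation between the leading parts of two entries of $G$ and the triangular factor defining $E_n$. Unpacking $E_n$ as left or right multiplication by a triangular matrix with polynomial entry $h(x_i,x_j)$ and analysing this cancellation modulo $(q)$ should produce an algebraic relation among the leading parts $\hom{f_a}$, $\hom{f_b}$ of two entries of $F$. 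The parachute inequality of Section \ref{sec:para} will then convert this relation into a precise degree bound, from which one extracts a polynomial $\tilde h$ and an elementary type among $\Eb,\Et,\Er,\El$ realising a genuine reduction of $F$ itself.

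The main obstacle is precisely this second case: when $E_n^{-1}$ itself is not a reduction of $F$, one must extract a reduction of a possibly different type from the structural information on $\hom{f_a}$ and $\hom{f_b}$, by a careful case analysis over the four elementary types together with the parachute inequality. This is exactly where the $\SL_2(\C)$ setting is expected to diverge from $\Af^3$: in the Shestakov--Umirbaev situation the analogous extraction fails and one is forced to introduce non-elementary reductions of types I--IV, whereas here the additional rigidity coming from the quadric equation $q=1$ should allow the parachute inequality to be exploited directly to produce an elementary reduction.
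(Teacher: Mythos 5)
Your reduction of the theorem to the ``Key Claim'' (every tame $F\notin\Oq$ admits an elementary reduction) by Noetherian induction on $\bdeg F$ is sound, and the easy half of your analysis of $F=E_n\circ G$ (if $\bdeg G<\bdeg F$ then $E_n^{-1}$ is itself an elementary reduction) is correct. But the hard case, $\bdeg G\ge\bdeg F$, is exactly the entire content of the theorem, and what you offer there is a statement of intent (``should produce an algebraic relation\dots will then convert\dots one extracts a polynomial $\tilde h$''), not an argument. Worse, your induction framework does not supply the input that any such argument needs: in the hard case $\bdeg G\ge \bdeg F$, so your degree-induction hypothesis says nothing about $G$, and minimality of the word length only tells you $G$ is tame with a shorter expression, not that $G$ admits an elementary reduction. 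The paper's proof is organized precisely to make that information available: it proves by induction on $\deg F$ the statement ``$F\in\A$ and $E\in\E$ imply $E\circ F\in\A$'' (Proposition \ref{pro:main}), so that one always knows an elementary reduction $E'$ of the lower automorphism exists, and it uses the induction hypothesis in the stronger form ``$E\circ G\in\A_{<F}$ for some $E\in\E$ implies $G\in\A$''. Recovering this inside your scheme forces a double induction (word length and degree) and essentially re-creates Proposition \ref{pro:main}.

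Moreover, even granting knowledge of an elementary reduction $E'$ of $G$, the extraction you envisage --- producing directly an elementary reduction of $F$ from a single cancellation of leading parts --- is not how the conclusion is actually reached. The paper's case analysis (cases $E\in\El$, $\Er$, $\Et$) concludes $E\circ F\in\A$ through chains of auxiliary elementary maps and coefficient changes (Lemma \ref{lem:inA}, the identities \eqref{eq:prs}, Lemma \ref{lem:basic}, Lemma \ref{lem:conditions}), invoking the induction hypothesis repeatedly; the parachute Minoration \ref{mino:para} enters only to rule out the incompatible case where neither $P(f_2,f_4)$ nor $Q(f_3,f_4)$ is degenerate, and the degenerate alternatives ($Q\in\C[f_4]$, $\hom{f_2}\in\C[\hom{f_4}]$, $P\in\C[f_4]$, $\hom{f_3}\in\C[\hom{f_4}]$) each require a separate construction. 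None of this case analysis, nor the bookkeeping lemmas it rests on, appears in your proposal, so as it stands the proof has a genuine gap at its core.
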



\section{The parachute}\label{sec:para}

In this section we shall obtain a minoration for the degree of a polynomial in two algebraically independent regular functions $f_1, f_2\in\C[\SL_2(\C)]$. 
For this, we adapt the techniques used in \cite{Ve} (see also \cite{SU:ineq,ku:ineq}) where the $f_i$'s were in $\C[x_1,\dots,x_n]$.

\subsection{Generic degree}

Given $f_1, f_2\in\C[\SL_2(\C)] \smallsetminus \{0\}$, consider $R = \sum R_{i,j} X_1^iX_2^j\in\C[X_1,X_2]$ a non-zero polynomial in two variables. 
Generically (on the coefficients $R_{i,j}$ of $R$), $\deg R(f_1,f_2)$ coincides with $\ged R$ where $\ged$ (standing for \textbf{generic degree}) is the weighted degree on  $\C[X_1,X_2]$ defined by 
$$\ged X_i =\deg f_i\in\N^3,$$
again with the graded lexicographic order. 
Namely we have
$$ R(f_1,f_2)  =  R\gen(f_1,f_2)+LDT(f_1,f_2)$$
where
$$  R\gen(f_1,f_2) = \sum_{\ged X_1^iX_2^j=\ged R}R_{i,j}f_1^if_2^j$$
is the leading part of $R$ with respect to the generic degree and $LDT$ represents the Lower (generic) Degree Terms. 
One has 
$$\deg LDT(f_1,f_2)<\deg R\gen(f_1,f_2)=\ged R =\deg R(f_1,f_2)$$ 
unless $R\gen(\hom{f_1},\hom{f_2})=0$, in which case the degree falls: $\deg R(f_1,f_2)<\ged R$. 

Let us focus on the condition $R\gen(\hom{f_1},\hom{f_2})=0$. Of course this can happen only if $\hom{f_1}$ and $\hom{f_2}$ are algebraically dependent. Remark that the ideal 
$$I=\{S\in\C[X_1,X_2];\; S(\hom{f_1},\hom{f_2})=0\}$$
must then be principal, prime and generated by a $\ged$-homogeneous polynomial. 
The only possibility is that $I=(X_1^{s_1}-\lambda X_2^{s_2})$ where $\lambda\in\C^*$, $s_1\deg f_1=s_2\deg f_2$ and $s_1,s_2$ are coprime. To sum up, in the case where $\hom{f_1}$ and $\hom{f_2}$ are algebraically dependent one has
\begin{equation}
\deg R(f_1,f_2)<\ged R \;\Leftrightarrow\;  R\gen(\hom{f_1},\hom{f_2})=0  \;\Leftrightarrow\; R\gen\in (H) \, \label{rel}
\end{equation}
where $H=X_1^{s_1}-\lambda X_2^{s_2}$.

\subsection{Pseudo-Jacobians}

If $f_1,f_2,f_3,f_4$ are polynomials in $\C[x_1,x_2,x_3,x_4]$, we denote by  $\jj_{\C^4}(f_1,f_2,f_3,f_4)$ the Jacobian determinant, i.e. the determinant of the Jacobian $4 \times 4$- matrix $(\frac{\partial f_i}{\partial x_j})$.
Then we define the \textbf{pseudo-Jacobian} of $f_1,f_2,f_3$ by the formula
$$\jj(f_1,f_2,f_3):=\jj_{\C^4}(q,f_1,f_2,f_3).$$

\begin{lem}
If $f_i, h_i \in \C[x_1,x_2,x_3,x_4]$ and $g_i = f_i + (q-1)h_i$, for $i = 1, \dots, 4$, then the pseudo-Jacobians $\jj(g_1,g_2,g_3)$ and $\jj(f_1,f_2,f_3)$ are equal up to an element in the ideal $(q-1)$.
In particular, if $f_1, f_2, f_3 \in \C[\SL_2(\C)]$, the pseudo-Jacobian $\jj(f_1,f_2,f_3)$ is a well-defined element of $\C[\SL_2(\C)]$. 
\end{lem}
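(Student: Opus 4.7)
The plan is to argue by multilinearity of the determinant in each row, changing one row at a time from $f_i$ to $g_i = f_i + (q-1)h_i$. The key observation is that the gradient of $(q-1)h_i$ decomposes as
$$\nabla\bigl((q-1)h_i\bigr) = (q-1)\,\nabla h_i + h_i\,\nabla q,$$
so the ``error'' introduced in row $i$ of the Jacobian matrix splits into one piece carrying the factor $(q-1)$ and another piece proportional to $\nabla q$, which is the first row of the matrix defining the pseudo-Jacobian.

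Concretely, I would first handle the case where only $g_1$ differs from $f_1$ and $g_j = f_j$ for $j=2,3$. Writing $\nabla g_1 = \nabla f_1 + h_1 \nabla q + (q-1)\nabla h_1$ and expanding the $4\times 4$ determinant by multilinearity along the second row gives
$$\jj(g_1,f_2,f_3) = \jj(f_1,f_2,f_3) + h_1 \cdot \det\begin{pmatrix}\nabla q\\ \nabla q\\ \nabla f_2\\ \nabla f_3\end{pmatrix} + (q-1)\,\jj(h_1,f_2,f_3).$$
The middle determinant vanishes because two rows coincide, so the correction reduces to $(q-1)\,\jj(h_1,f_2,f_3)$, which lies in the ideal $(q-1)$.

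Iterating this argument on rows $2$ and $3$ of the pseudo-Jacobian, I replace $f_2$ by $g_2$ and then $f_3$ by $g_3$; each step modifies $\jj$ only by an element of $(q-1)$. Adding these three corrections yields
$$\jj(g_1,g_2,g_3) \equiv \jj(f_1,f_2,f_3) \pmod{(q-1)},$$
which is the first claim. For the second claim, given $f_1,f_2,f_3 \in \C[\SL_2(\C)]$, any two lifts to $\C[x_1,x_2,x_3,x_4]$ differ coordinate-wise by multiples of $(q-1)$, so by what we just proved their pseudo-Jacobians determine the same class modulo $(q-1)$, i.e.\ the same element of $\C[\SL_2(\C)]$.

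The only step with any subtlety is the row-duplication argument, and it is entirely routine once one remembers that the Jacobian is multilinear in the \emph{gradients} of the entries, not in the entries themselves; no real obstacle is expected.
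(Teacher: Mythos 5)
Your argument is correct and is essentially the paper's proof in expanded form: the multilinearity of the determinant in its rows together with the Leibniz decomposition $\nabla\bigl((q-1)h_i\bigr)=(q-1)\nabla h_i+h_i\nabla q$ is exactly the statement that $\jj$ is a derivation in each entry, and your row-duplication step is the paper's observation that $\jj(q,\cdot,\cdot)$ (and its permutations) vanish. Nothing further is needed.
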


\begin{proof}
This is an easy consequence of the following two observations: 
\begin{itemize}
\item The Jacobian $\jj_{\C^4}$ and, consequently, the pseudo-Jacobian $\jj$ as well, are $\C$-derivations in each of their entries;
\item One has $0=\jj_{\C^4}(q,q,f_1,f_2)=\jj(q,f_1,f_2)= -\jj(f_1,q,f_2)= \jj(f_1,f_2,q)$. 
\end{itemize}
\end{proof}

\begin{lem} \label{lem:pseudojac}
Assume $f_1, f_2, f_3 \in \C[\SL_2(\C)]$. Then
$$\deg\jj(f_1,f_2,f_3) \leq  \deg f_1+\deg f_2+\deg f_3-(1,1,1).$$
\end{lem}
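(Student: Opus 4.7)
I would prove the inequality by reducing to an analogous statement about the ordinary Jacobian on $\C[x_1,x_2,x_3,x_4]$ with respect to the weighted degree $\olddeg$. First, pick good representatives $p_1,p_2,p_3 \in \C[x_1,x_2,x_3,x_4]$ of $f_1,f_2,f_3$, so that $\deg f_i = \olddeg p_i$. By the previous lemma, the polynomial $J := \jj_{\C^4}(q,p_1,p_2,p_3)$ represents $\jj(f_1,f_2,f_3)$ in $\C[\SL_2(\C)]$; since $\deg$ on the quotient is defined as the minimum of $\olddeg$ over all representatives, it suffices to show
$$\olddeg J \;\leq\; \olddeg p_1 + \olddeg p_2 + \olddeg p_3 - (1,1,1).$$

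The core of the argument is a general inequality for the $\olddeg$-weighted Jacobian. Let $w_i \in \N^3$ denote the weight of $x_i$, so $w_1=(1,0,0)$, $w_2=(0,1,0)$, $w_3=(1,0,1)$, $w_4=(0,1,1)$, and $w_1+w_2+w_3+w_4 = (2,2,2)$. Checking monomial-by-monomial, for any nonzero $g \in \C[x_1,x_2,x_3,x_4]$ one has $\olddeg \tfrac{\partial g}{\partial x_i} \leq \olddeg g - w_i$ (interpreting the difference in $\Z^3$ with its graded-lex extension). Expanding the $4\times 4$ Jacobian determinant as a signed sum over permutations $\sigma\in S_4$, each term is a product $\prod_{i=0}^{3} \partial_{\sigma(i)} g_i$ and therefore has $\olddeg$ at most
$$\sum_{i=0}^{3} \olddeg g_i \;-\; \sum_{i=0}^{3} w_{\sigma(i)} \;=\; \sum_{i=0}^{3} \olddeg g_i \;-\; (2,2,2),$$
where the key point is that the sum of the $w_i$'s is permutation-invariant. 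Since $\olddeg$ of a sum is bounded by the max of the $\olddeg$'s, the same bound holds for $\jj_{\C^4}(g_0,g_1,g_2,g_3)$.

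Applying this with $g_0 = q$, noting $\olddeg q = (1,1,1)$, and with $g_i = p_i$ for $i=1,2,3$, gives
$$\olddeg J \;\leq\; (1,1,1) + \olddeg p_1 + \olddeg p_2 + \olddeg p_3 - (2,2,2) \;=\; \sum_{i=1}^{3}\deg f_i - (1,1,1),$$
which is the desired bound on $\deg \jj(f_1,f_2,f_3)$. The only thing requiring minor care is that the graded-lexicographic order on $\N^3$ behaves as expected under the arithmetic operations used (additivity on products, sub-additivity on sums, and the partial-derivative bound in $\Z^3$), which I expect to be the mildest obstacle since these are standard properties of weighted degrees; the substantive observation is simply that $w_1+w_2+w_3+w_4-\olddeg q = (1,1,1)$, explaining the precise shape of the minoration.
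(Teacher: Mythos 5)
Your proof is correct and follows essentially the same route as the paper: bound $\olddeg \jj_{\C^4}$ by the sum of the weighted degrees minus $(2,2,2)$ (you merely spell out via the permutation expansion what the paper calls ``an easy computation''), then specialize the first entry to $q$ with $\olddeg q=(1,1,1)$ and pass to good representatives to transfer the bound to $\deg$ on $\C[\SL_2(\C)]$.
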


\begin{proof}
An easy computation shows the following inequality:

$$
\olddeg \jj_{\C^4}(f_1,f_2,f_3,f_4)  \leq \sum_i \olddeg f_i- \sum_{i}\olddeg x_i=
\sum_i \olddeg f_i -(2,2,2)\; .
$$

Recalling the definitions of $\jj$ and $\deg$ we obtain:
\begin{multline*}
\deg\jj(f_1,f_2,f_3) \leq \olddeg \jj_{\C^4} (q,f_1,f_2,f_3) \\ 
\leq \olddeg q+ \sum_i \olddeg f_i -(2,2,2) = \sum_i \olddeg f_i -(1,1,1).
\end{multline*}
Assuming $f_i$ to be a good representative of $f_i+(q-1)$, that is $\deg f_i = \olddeg f_i$ for $i=1,2,3$,  we get, $\forall f_1,f_2,f_3\in\C[\SL_2(\C)]$,
$$\deg\jj(f_1,f_2,f_3)\leq \deg f_1+\deg f_2+\deg f_3-(1,1,1).$$
\end{proof}

We shall essentially use those pseudo-Jacobians with $f_1=x_1,x_2,x_3$ or $x_4$.
Therefore we introduce the notation 
$\jj_k(\cdot,\cdot):=\jj(x_k,\cdot,\cdot)$ for all $k=1,2,3,4$. The inequality from Lemma \ref{lem:pseudojac} gives
$$ \deg\jj_k(f_1,f_2) \leq \deg f_1+\deg f_2+\deg x_k-(1,1,1)$$
from which we deduce
\begin{equation} \label{ineqjb}
\deg\jj_k(f_1,f_2) <  \deg f_1+\deg f_2,\;\forall k=1,2,3,4.
\end{equation}

We shall also need the following observation. 

\begin{lem}\label{lem:pastousnul}
If $f_1,f_2$ are algebraically independent functions in $\C[\SL_2(\C)]$, then the $\jj_k(f_1,f_2)$, $k =1, \dots, 4$, are not simultaneously zero.

In particular $\max_{k=1,2,3,4}\deg\jj_k(f_1,f_2)\neq -\infty$ or, equivalently, $$\max_{k=1,2,3,4}\deg\jj_k(f_1,f_2)\in\N^3.$$
\end{lem}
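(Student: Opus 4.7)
The plan is to prove the contrapositive: if $\jj_k(f_1,f_2)=0$ in $\C[\SL_2(\C)]$ for every $k=1,2,3,4$, then $f_1$ and $f_2$ are algebraically dependent.

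First I would rewrite each pseudo-Jacobian as a $3\times 3$ minor. Since $\jj_k(f_1,f_2)=\jj_{\C^4}(q,x_k,f_1,f_2)$ and the second row of the corresponding $4\times 4$ Jacobian matrix is simply $e_k$, expanding along that row gives $\jj_k(f_1,f_2)=\pm M_k$, where $M_k$ is the $3\times 3$ minor of the $3\times 4$ Jacobian matrix of $(q,f_1,f_2)$ obtained by deleting the $k$-th column. The hypothesis that all $\jj_k(f_1,f_2)$ vanish on $\SL_2(\C)$ therefore means that this $3\times 4$ Jacobian matrix has rank at most $2$ at every point $p\in\SL_2(\C)$, i.e.\ the three vectors $\nabla q(p)$, $\nabla f_1(p)$, $\nabla f_2(p)$ are linearly dependent for every such $p$.

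Next I would pass to the tangent space. Because $\SL_2(\C)$ is smooth, $\nabla q(p)\neq 0$ at every $p\in\SL_2(\C)$, and the tangent space at $p$ is the orthogonal complement of $\nabla q(p)$ in $\C^4$. Reducing the dependence above modulo $\nabla q(p)$ yields that the restrictions of $df_1$ and $df_2$ to $T_p\SL_2(\C)$ are linearly dependent for every $p$. Hence the morphism $\varphi=(f_1,f_2)\colon\SL_2(\C)\to\C^2$ has differential of rank at most $1$ everywhere; by generic smoothness its image then has dimension at most $1$, so lies in an irreducible algebraic curve of $\C^2$, which produces a nontrivial polynomial relation between $f_1$ and $f_2$, contradicting the hypothesis. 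The ``in particular'' clause is immediate from the convention $\deg 0=-\infty$.

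The main obstacle is the careful transition from the pseudo-Jacobian identity on $\Af^4$ to a pointwise rank statement on $\SL_2(\C)$; this is legitimized by the previous lemma, which says that $\jj_k(f_1,f_2)$ is a well-defined element of $\C[\SL_2(\C)]$, so its vanishing there is equivalent to its vanishing as a function on the smooth affine variety $\SL_2(\C)$.
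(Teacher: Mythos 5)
Your proof is correct, but it takes a genuinely different route from the paper. You argue geometrically: identifying each $\jj_k(f_1,f_2)$ with a maximal minor of the $3\times 4$ Jacobian of $(q,f_1,f_2)$, you turn the simultaneous vanishing (legitimately read pointwise, since $\C[\SL_2(\C)]$ is a domain) into the statement that $\nabla q(p),\nabla f_1(p),\nabla f_2(p)$ are everywhere dependent, then use smoothness of the quadric ($\nabla q\neq 0$ on $q=1$) to restrict to the tangent space, and conclude by generic smoothness that the image of $(f_1,f_2)$ has dimension at most $1$, contradicting algebraic independence. The paper instead stays entirely on the algebraic side: assuming all $\jj_k(f_1,f_2)=0$, it views the maps $\partial_k=\jj(x_k,f_1,\cdot)$ as derivations killing $\C(f_1,f_2)$, applies the identity ${\rm trdeg}_KL=\dim_L{\rm Der}_KL$ (char.\ $0$) twice — first over $K=\C(f_1,f_2)$, then over $K=\C(f_1)$ — to force $\jj(x_i,x_j,x_k)=0$ for every triple, and reaches a contradiction with the explicit computation $\jj(x_1,x_2,x_3)=\jj_{\C^4}(q,x_1,x_2,x_3)=-\tfrac{\partial q}{\partial x_4}=-x_1\neq 0$. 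Both arguments hinge on characteristic zero (generic smoothness for you, the derivation-dimension formula for the paper). Your version buys geometric transparency and makes explicit that smoothness of the quadric is the essential input; the paper's version avoids any appeal to points or Sard-type theorems and remains within the derivation formalism already used throughout the section, which keeps the treatment uniform and closer to the Shestakov--Umirbaev/Kuroda style it generalizes.
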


\begin{proof}
Assume that  $\jj(x_k,f_1,f_2)=0$ $\forall k=1,2,3,4$. Then the four derivations $\partial_k:=\jj(x_k,f_1,\cdot)$ have both $f_1$ and $f_2$ in their kernel. We now need the following well-known   relation between the transcendence degree and the dimension of a derivations (see e.g. \cite[VIII,\S 5]{L} ): if $K\subset L$ is a characteristic 0 field extension then
\begin{eqnarray}\label{trdeg=dim}
  {\rm trdeg}_KL &= & {\rm dim}_L{\rm Der}_KL\;.
\end{eqnarray}
Applied to $K=\C(f_1,f_2)\subset L=\C(\SL_2(\C))$ this gives that any two $\C(f_1,f_2)$-derivations are proportional so $\forall k\neq l\in\{1,2,3,4\}$, any two $\partial_k$, $\partial_l$ are non-trivially related: $a\partial_k+b\partial_l=0$. Evaluating this equality in $x_k$ and $x_l$ gives that $\partial_k(x_l)=\jj(x_k,f_1,x_l)= \partial_l(x_k)=0$ for all $k,l\in\{1,2,3,4\}$. It follows that all the derivations $\partial_{kl}:=\jj(x_k,x_l,\cdot)$ are $\C(f_1)$-derivations and, applying (\ref{trdeg=dim}) again with $K=\C(f_1)$ it follows that any such three $\partial_{kl}$ are related e.g. $a\partial_{12}+b\partial_{13}+c\partial_{23}=0$ with $a,b,c$ not all zero. Again evaluating it on $x_1$, $x_2$, $x_3$ gives $\jj(x_1,x_2,x_3)=0$ (and the same holds for any triple in $\{1,2,3,4\}$). This means that $\jj_{\C^4}(q,x_1,x_2,x_3)=-\frac{\partial q}{\partial x_4}=-x_1$ (with the $x_i$'s regarded as elements of $\C[x_1,x_2,x_3,x_4]$) is zero, a contradiction.
\end{proof} 

\subsection{The parachute}

In this section $f_1, f_2 \in \C[\SL_2(\C)]$ are algebraically independent, and we denote by $d_i \in \N^3$ the degree $\deg f_i$.
We define the \textbf{parachute} of $f_1, f_2$ to be  
$$ \nabla(f_1,f_2) = d_1 + d_2 - \max_{k = 1,2,3,4} \deg\jj_k(f_1,f_2).$$ 
By Lemma \ref{lem:pastousnul}, we immediately remark that $ \nabla(f_1,f_2)\leq d_1+d_2 $.

\begin{lem}\label{lem:ineq1}
Assume $\deg\frac{\partial^n R}{\partial X_2^n}(f_1,f_2)$ coincides with the generic degree $\ged \frac{\partial^n R}{\partial X_2^n}$. 
Then 
$$d_2 \cdot\deg_{X_2}R-n\nabla(f_1,f_2)  < \deg R(f_1,f_2).$$
\end{lem}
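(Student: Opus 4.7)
The plan is to combine a chain-rule identity for the pseudo-Jacobian with the inequality \eqref{ineqjb}, and to iterate the resulting estimate $n$ times. Since $\jj_k(f_1,\cdot)$ is a $\C$-derivation that vanishes on $f_1$, the chain rule applied to $R(f_1,f_2)$ gives
$$\jj_k\bigl(f_1,R(f_1,f_2)\bigr) \;=\; \frac{\partial R}{\partial X_2}(f_1,f_2)\cdot \jj_k(f_1,f_2)$$
for every $k\in\{1,2,3,4\}$. Applying \eqref{ineqjb} to the pair $\bigl(f_1,R(f_1,f_2)\bigr)$, taking the maximum over $k$, and invoking the definition of $\nabla$, yields the base inequality
$$\deg\frac{\partial R}{\partial X_2}(f_1,f_2) + d_2 - \nabla(f_1,f_2) \;<\; \deg R(f_1,f_2). \qquad(\star)$$

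Next I would apply $(\star)$ with $R$ replaced successively by $\frac{\partial^k R}{\partial X_2^k}$ for $k=0,1,\dots,n-1$ and sum the resulting inequalities. The intermediate degrees telescope and I am left with
$$\deg\frac{\partial^n R}{\partial X_2^n}(f_1,f_2) + n\bigl(d_2-\nabla(f_1,f_2)\bigr) \;<\; \deg R(f_1,f_2).$$
By hypothesis, the first term on the left equals $\ged\frac{\partial^n R}{\partial X_2^n}$.

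Finally I would bound this generic degree from below. Setting $m=\deg_{X_2}R$ and choosing any $i$ with $R_{i,m}\neq 0$, the monomial $R_{i,m}X_1^iX_2^m$ contributes a nonzero multiple of $X_1^iX_2^{m-n}$ to $\frac{\partial^n R}{\partial X_2^n}$, whose generic degree is $id_1+(m-n)d_2\geq(m-n)d_2$, since $d_1\in\N^3$ and the graded-lex order is preserved under addition of non-negative vectors. Hence $\ged\frac{\partial^n R}{\partial X_2^n}\geq(m-n)d_2$, and substituting into the previous estimate produces $d_2\cdot\deg_{X_2}R - n\nabla(f_1,f_2) < \deg R(f_1,f_2)$, as claimed. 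The one place where I expect mild technical care is in the telescoping step when some intermediate derivatives happen to vanish: one must track the convention $\deg 0=-\infty$ and preserve strict inequalities inside the ordered group extending graded-lex, but this is bookkeeping rather than a substantive obstacle.
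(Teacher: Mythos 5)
Your proof is correct and follows essentially the same route as the paper: the chain-rule identity $\jj_k(f_1,R(f_1,f_2))=\frac{\partial R}{\partial X_2}(f_1,f_2)\,\jj_k(f_1,f_2)$ combined with inequality (\ref{ineqjb}) gives the one-step estimate, which is then iterated $n$ times and concluded via the bound $\ged\frac{\partial^n R}{\partial X_2^n}\geq d_2\cdot(\deg_{X_2}R-n)$. Your explicit monomial justification of that last bound and your remark about vanishing intermediate derivatives are just finer bookkeeping of the same argument.
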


\begin{proof}

As already remarked $\jj_{\C^4}$, $\jj$ and now $\jj_k$ as well are $\C$-derivations in each of their entries. We may then apply the chain rule on $\jj_k(f_1,\cdot)$ evaluated in $R(f_1,f_2)$:
$$ \frac{\partial R}{\partial X_2}(f_1,f_2)\jj_k(f_1,f_2)
=  \jj_k(f_1,R(f_1,f_2)).$$
Now taking the degree and applying inequality (\ref{ineqjb}) (with $R(f_1,f_2)$ instead of $f_2$), we obtain
$$\deg\frac{\partial R}{\partial X_2}(f_1,f_2)+\deg\jj_k(f_1,f_2)  <  d_1+\deg R(f_1,f_2).$$
We deduce
$$\deg\frac{\partial R}{\partial X_2}(f_1,f_2)+d_2-(\underbrace{d_1+d_2-\max_{k=1,2,3,4}\deg\jj_k(f_1,f_2)}_{=\nabla(f_1,f_2)})  < \deg R(f_1,f_2).$$
By induction, for any $n \ge 1$ we have
\begin{align*}
\deg\frac{\partial^n R}{\partial X_2^n}(f_1,f_2)+nd_2-n\nabla(f_1,f_2) & < \deg R(f_1,f_2) \; .
\end{align*}
Now if the integer $n$ is as given in the statement one gets:
$$
\deg\frac{\partial^n R}{\partial X_2^n}(f_1,f_2)=\ged \frac{\partial^n R}{\partial X_2^n}\geq d_2\cdot\deg_{X_2}\frac{\partial^n R}{\partial X_2^n} =d_2\cdot(\deg_{X_2}R-n)=d_2\cdot\deg_{X_2}R-d_2n
$$ 
which, together with the previous inequality, gives the result.
\end{proof}

\begin{lem}\label{lem:goodn}
Let $H$ be the generating relation between $\hom{f_1}$ and $\hom{f_2}$ as in the equivalence (\ref{rel}) and $n\in\N$ such that $R\gen\in(H^n)\setminus(H^{n+1})$. Then $n$ fulfills the assumption of Lemma \ref{lem:ineq1} i.e.
$$\deg\frac{\partial^n R}{\partial X_2^n}(f_1,f_2)=\ged \frac{\partial^n R}{\partial X_2^n}\; .$$
\end{lem}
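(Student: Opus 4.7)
The plan is to reduce via equivalence (\ref{rel}): since $\hom{f_1},\hom{f_2}$ are algebraically dependent (otherwise the hypothesis is vacuous), the claimed equality $\deg\partial^n R/\partial X_2^n(f_1,f_2)=\ged\partial^n R/\partial X_2^n$ amounts to showing $(\partial^n R/\partial X_2^n)\gen\notin(H)$. I would first identify this leading part with $\partial^n R\gen/\partial X_2^n$, and then verify that the latter is nonzero modulo $H$.

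For the identification, decompose $R=R\gen+R'$ with $\ged R'<\ged R$. Since $R\gen$ is $\ged$-homogeneous and $\partial/\partial X_2$ drops the $\ged$ of each surviving monomial by exactly $d_2$, the derivative $\partial^n R\gen/\partial X_2^n$ is either zero or $\ged$-homogeneous of degree $\ged R-nd_2$. By the same monomial-wise argument applied to $R'$, one has $\ged\partial^n R'/\partial X_2^n\leq\ged R'-nd_2<\ged R-nd_2$. So as soon as $\partial^n R\gen/\partial X_2^n\neq 0$, no cancellation can occur and it coincides with $(\partial^n R/\partial X_2^n)\gen$.

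To see that $\partial^n R\gen/\partial X_2^n\not\equiv 0\pmod H$, write $R\gen=H^n S$ with $S\notin(H)$ (the content of the assumption $R\gen\in(H^n)\setminus(H^{n+1})$) and expand by Leibniz:
$$\frac{\partial^n(H^n S)}{\partial X_2^n}=\sum_{k=0}^n\binom{n}{k}\frac{\partial^k H^n}{\partial X_2^k}\cdot\frac{\partial^{n-k}S}{\partial X_2^{n-k}}\; .$$
For $k<n$, the factor $\partial^k H^n/\partial X_2^k$ is still divisible by $H^{n-k}$, so modulo $H$ only the $k=n$ term survives; a one-line induction gives $\partial^n H^n/\partial X_2^n\equiv n!\,(\partial H/\partial X_2)^n\pmod H$. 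With $H=X_1^{s_1}-\lambda X_2^{s_2}$ this reduces the problem to showing $X_2^{n(s_2-1)}\,S\notin(H)$.

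Algebraic independence of $f_1,f_2$ forces $s_1,s_2\geq 1$ (neither $f_i$ is constant), and $\gcd(s_1,s_2)=1$ makes the binomial $H$ irreducible, so $(H)$ is prime; since $\deg_{X_1}H=s_1\geq 1$ we have $X_2\notin(H)$, and $S\notin(H)$ by construction, hence their product is not in $(H)$. This yields $(\partial^n R/\partial X_2^n)\gen\notin(H)$, and (\ref{rel}) concludes. The main delicate step is the identification in the second paragraph: one must combine the $\ged$-homogeneity of $R\gen$ with the strict inequality $\ged R'<\ged R$ to rule out cancellation between the leading and lower-order parts after $n$ differentiations, since a priori only $R\gen$ (not $R$ itself) is controlled by $H$.
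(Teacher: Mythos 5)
Your proof is correct and takes essentially the same route as the paper: both identify $(\tfrac{\partial^n R}{\partial X_2^n})\gen$ with $\tfrac{\partial^n R\gen}{\partial X_2^n}$ and show the latter lies outside $(H)$ by tracking how $\partial/\partial X_2$ lowers the $H$-adic order, the paper doing this by a stepwise induction ($R\gen\in(H^n)\setminus(H^{n+1})\Rightarrow \tfrac{\partial R\gen}{\partial X_2}\in(H^{n-1})\setminus(H^{n})$) while you do it in one shot with the $n$-fold Leibniz rule. The details you add (no cancellation between leading and lower $\ged$-parts, primality of $(H)$, $X_2\notin(H)$ and $s_1,s_2\geq 1$) are precisely the points the paper leaves implicit, so this is a filled-in version of the same argument rather than a different one.
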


\begin{proof}
It suffices to remark that ${(\frac{\partial R}{\partial X_2})}\gen=\frac{\partial R\gen}{\partial X_2}$ and that $R\gen\in(H^n)\smallsetminus(H^{n+1})$ implies $\frac{\partial R\gen}{\partial X_2}\in(H^{n-1})\smallsetminus(H^{n})$. One concludes by induction.
\end{proof}

Remark that, by definition of $n$ in Lemma \ref{lem:goodn} above, one has $\deg_{X_2} R\geq\deg_{X_2}R\gen\geq ns_2$ which together with Lemma \ref{lem:ineq1} gives (recall that $s_1d_1=s_2d_2$)
\begin{equation}\label{last}
d_1ns_1 -n\nabla(f_1,f_2)  < \deg R(f_1,f_2) 
\end{equation}

\subsection{The minoration}

Now we come to the main result of this section, which is a close analogue of \cite[Lemma 3.3(i)]{Ku:main}.

\begin{mino}\label{mino:para}
Let $f_1,f_2\in\C[\SL_2(\C)]$ be algebraically independent and $R(f_1,f_2)\in\C[f_1,f_2]$.
Assume $R(f_1,f_2) \not\in\C[f_2]$ and $\hom{f_1}\not\in\C[\hom{f_2}]$.
Then
$$
  \deg (f_2R(f_1,f_2))>\deg f_1\, .
$$
\end{mino}

\begin{proof}
Let $n$ be as in Lemma \ref{lem:goodn}. 
If $n=0$ then $\deg R(f_1,f_2)=\ged R\geq \deg f_1$ by the assumption  $R(f_1,f_2) \not\in\C[f_2]$ and then $\deg (f_2R(f_1,f_2))\geq\deg f_2+\deg f_1>\deg f_1$ as wanted. 

If $n\geq 1$ then, by (\ref{last}),
$$ d_1s_1-\nabla(f_1,f_2) < \deg R(f_1,f_2) $$
and, since $\nabla(f_1,f_2) \leq d_1 + d_2$, 
$$  d_1s_1-d_1-d_2 < \deg R(f_1,f_2).$$
We obtain
$$d_1(s_1-1) <  \deg R(f_1,f_2)+d_2=\deg(f_2R(f_1,f_2)).$$
The assumption $\hom{f_1}\not\in\C[\hom{f_2}]$ forbids $s_1$ to be equal to one, hence we get the desired minoration.
\end{proof}


\section{Proof of the main result} \label{sec:proof}

In this section we prove the following proposition, which immediately implies  Theorem \ref{thm:main}.
 
\begin{pro}\label{pro:main}
If $F \in \A$, and $E \in \E$, then $E\circ F \in \A$. 
\end{pro}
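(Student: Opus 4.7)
The plan is to proceed by strong induction on $\deg F$, using the graded lexicographic order on $\N^3$.

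For the base case $F \in \Oq$, the entries of $F$ are linear with pairwise algebraically independent leading parts. If $E \circ F \in \Oq$, then $E \circ F \in \A$ directly. Otherwise the polynomial part $h$ of $E$ is non-constant, and a direct computation shows $\deg(E \circ F) > \deg F$ strictly (the modified row or column picks up a genuine monomial in $h(f_1,f_2)$ whose graded weight dominates, and no cancellation is possible because of algebraic independence of the relevant pair). Since each of $\Et, \Eb, \Er, \El$ is a group, $E^{-1} \in \E$, and $E^{-1} \circ (E \circ F) = F \in \Oq \subseteq \A$ is then an elementary reduction of $E \circ F$, giving $E \circ F \in \A$.

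For the induction step, take $F \in \A \setminus \Oq$, so there is $E_0 \in \E$ with $F_1 := E_0 \circ F \in \A$ and $\deg F_1 < \deg F$; by induction every $E' \circ F_1$ with $E' \in \E$ is in $\A$. If $\deg(E \circ F) > \deg F$, then $E^{-1}$ is an elementary reduction of $E \circ F$ to $F \in \A$, so $E \circ F \in \A$. Otherwise $\deg(E \circ F) \leq \deg F$; if moreover $E$ and $E_0$ lie in the same one of $\Et, \Eb, \Er, \El$, then $E \circ E_0^{-1}$ is elementary of that same type and
\[E \circ F = (E \circ E_0^{-1}) \circ F_1 \in \A\]
by the induction hypothesis applied to $F_1$.

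The remaining and hardest case is when $E$ and $E_0$ are of different types with $\deg(E \circ F) \leq \deg F$. Here I would invoke the parachute inequality (Minoration \ref{mino:para}). The strategy is to apply the parachute to a suitable pair $\hom{f_i}, \hom{f_j}$ of leading parts (chosen so that the pair governs both the $E_0$-reduction and the would-be $E$-reduction): the coexistence of the two situations forces rigid algebraic-dependence constraints on these leading parts, and from these constraints one explicitly constructs an elementary reduction $E' \circ (E \circ F)$ of strictly smaller degree whose type agrees with $E_0$'s. Once obtained, either the same-type step above or the induction hypothesis applied to the result closes the argument. The main obstacle is exactly this parachute-based construction; the rest is case-bookkeeping across the four types.
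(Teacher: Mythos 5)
Your framework (induction on $\deg F$, the observation that $\deg (E\circ F) > \deg F$ lets $E^{-1}$ serve as an elementary reduction, and the same-type case via $E\circ E_0^{-1}$ together with the induction hypothesis applied to $F_1=E_0\circ F$) agrees with the paper's setup. But the entire substance of the proof lies in the case you defer: $E$ and $E_0$ of different types with $\deg(E\circ F)\le\deg F$, and there your proposal offers only a strategy statement (``apply the parachute to a suitable pair \dots\ one explicitly constructs an elementary reduction of the same type as $E_0$'s \dots\ the rest is case-bookkeeping''). That construction is never carried out, so this is a genuine gap, and it is exactly where the paper has to work: after normalizing $E_0$ to $\El$ via Lemma \ref{lem:inA}, the case $E\in\Er$ is \emph{not} handled by the parachute at all --- one shows that both polynomials $P$ and $Q$ must be constants (otherwise the two degree inequalities are directly contradictory), and then $E\circ F\in\A$ is obtained by explicit constant $2\times 2$ matrix identities (the $p,q,r,s$ computation, split into $pq\neq 1$ and $pq=1$) combined with repeated appeals to Induction Hypothesis \ref{IH} and Lemma \ref{lem:inA}. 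In the case $E\in\Et$, Minoration \ref{mino:para} applied to both $P(f_2,f_4)$ and $Q(f_3,f_4)$ yields a contradiction, so one is forced into one of four degenerate situations ($Q\in\C[f_4]$, $\hom{f_2}\in\C[\hom{f_4}]$, $P\in\C[f_4]$, $\hom{f_3}\in\C[\hom{f_4}]$), each of which needs its own conjugation trick ($E\circ E'\circ E^{-1}$ or $E'\circ E\circ E'^{-1}$, or an auxiliary $\tilde E$ applied to $F$) plus the degree bookkeeping of Lemma \ref{lem:basic}. Your description of the outcome --- a same-type reduction of $E\circ F$ of strictly smaller degree --- matches only some of these sub-cases and misrepresents the $\Er$ mechanism, so the gap is not merely one of detail but of the actual argument.

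Two smaller points. First, in your base case the justification ``no cancellation is possible because of algebraic independence of the relevant pair'' is not sound as stated: for $F\in\Oq$ the leading parts of the two untouched entries can be algebraically dependent (e.g.\ $F=\bigl(\begin{smallmatrix} x_1+x_3 & x_2+x_4\\ x_3 & x_4\end{smallmatrix}\bigr)$ has $\hom{f_2}=\hom{f_4}=x_4$), so ruling out a degree drop needs an argument using the relation $f_1f_4-f_2f_3=1$ or the multiplicativity of $\deg$, not independence. Second, you nowhere use the normalization of $E_0$ (Lemma \ref{lem:inA}) nor the equality $\deg f_1+\deg f_4=\deg f_2+\deg f_3$ of Lemma \ref{lem:basic}; both are needed to make the different-type cases tractable, which is further evidence that the missing case cannot be dismissed as bookkeeping.
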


The proposition is clear when $\bdeg E\circ F > \bdeg F$.
From now on we assume that  
$$\bdeg E\circ F \le \bdeg F.$$

The result is also clear if $F \in \Oq$. 
We assume the following induction hypothesis:

\begin{equation*}
\textit{If } H \in \A, E \in \E, \textit{ and } \bdeg H < \bdeg F, 
\textit{ then } E\circ H \in \A.
\end{equation*}
For practical reasons we introduce the notation:
$$
  \A_{<F}:=\{H\in\A\vert\deg H<\deg F\}
$$
and rewrite the induction hypothesis in an equivalent formulation:
\begin{IH}\nonumber\label{IH}
If $E\circ G\in\A_{<F}$ for some $E\in\E$ then $G\in\A$.
\end{IH}

We shall use the following basic observation repetitively.

\begin{lem} \label{lem:inA}
Let $F = \mat{f_1}{f_2}{f_3}{f_4} \in \Aut(\SL_2(\C))$, and $a,b \in \C^*$. Then $F \in \A$ if and only if the following equivalent conditions hold:

\begin{enumerate}[$(i)$]
\item   $\mat{af_1}{f_2/b}{bf_3}{f_4/a} \in \A$;
\item $\mat{f_1}{f_2}{f_3}{f_4}\mat{a}{0}{0}{\frac{1}{a}} \in \A$;
\item $\mat{f_1}{f_2}{f_3}{f_4}\mat{0}{a}{-\frac{1}{a}}{0} \in \A$.
\end{enumerate}
\end{lem}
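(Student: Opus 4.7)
The plan is to recognize each of $(i)$, $(ii)$, $(iii)$ as a statement of the form ``$G \circ F \in \A$'' for an explicit element $G \in \Oq$ of ``diagonal type'', and then to show that left-composition with such a $G$ is a bijection of $\A$ with itself.

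Explicitly, condition $(i)$ reads $L_{a,b} \circ F \in \A$, where $L_{a,b}(x) = \mat{ax_1}{x_2/b}{bx_3}{x_4/a}$ is a linear automorphism of $\SL_2(\C)$ whose membership in $\Oq$ is a one-line determinant check. Conditions $(ii)$ and $(iii)$ read $\rho_A \circ F \in \A$, where $\rho_A\colon x \mapsto xA$ is right-multiplication by the indicated $A \in \SL_2(\C)$ (the matrix product $\mat{f_1}{f_2}{f_3}{f_4} \cdot A$ is precisely how $\rho_A \circ F$ is represented as a matrix of polynomials); since $A \in \SL_2(\C)$, we have $\rho_A \in \Oq$.

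For any $G$ of these three forms I claim that $F \in \A \Leftrightarrow G \circ F \in \A$, and I would deduce this from two observations. First, $\deg(G \circ F) = \deg F$, because applying $L_{a,b}$ or a diagonal $\rho_A$ only rescales each entry of $F$, while an anti-diagonal $\rho_A$ permutes the entries up to rescaling; in either case the multiset $\{\deg f_i\}$ is unchanged. Second, $G\,\E\,G^{-1} \subset \E$: a direct substitution shows that conjugation by $L_{a,b}$ or by a diagonal $\rho_A$ preserves each of the four types $\Eb$, $\Et$, $\Er$, $\El$ (replacing the polynomial parameter $h$ by a rescaled version of itself in the same pair of variables), and that conjugation by an anti-diagonal $\rho_A$ merely permutes the four types among themselves. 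Granting these two points, a reduction sequence $F = F_0,\; F_1 = E_1 \circ F_0,\; \ldots,\; F_n \in \Oq$ transports to $G \circ F_i = (G E_i G^{-1}) \circ (G \circ F_{i-1})$, which ends in $G \circ F_n \in \Oq$ with strictly decreasing degrees; hence $G \circ F \in \A$. The converse direction is obtained by applying the same argument to $G^{-1}$, using that $L_{a,b}^{-1} = L_{1/a,\, 1/b}$ and that the inverses of $\mat{a}{0}{0}{1/a}$ and $\mat{0}{a}{-1/a}{0}$ are again of the same diagonal, respectively anti-diagonal, form.

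The only real computation is the verification that $G\,\E\,G^{-1} \subset \E$; even this is a routine substitution, although the anti-diagonal case requires a little bookkeeping to see which type is sent to which. I do not expect any genuine obstacle beyond this mechanical check.
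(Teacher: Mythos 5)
Your proposal is correct and rests on exactly the paper's mechanism: these linear maps lie in $\Oq$, preserve the degree, and normalize $\E$, so reduction sequences transport one-to-one (this is literally the paper's argument for $(iii)$, where conjugation by right multiplication by $\mat{0}{1}{-1}{0}$ exchanges $\El$ and $\Er$). The only difference is cosmetic: for $(i)$ (and $(ii)$ as the special case $a=b$) the paper transports just the first reduction step, exhibiting an explicit $E'\in\El$ with rescaled polynomial parameter such that $E'\circ\mat{af_1}{f_2/b}{bf_3}{f_4/a}=E\circ F$, whereas you conjugate the entire chain; both versions reduce to the same routine check that conjugation rescales or permutes the four elementary types.
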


\begin{proof}
$F\in\A\Leftrightarrow(i)$: It is sufficient to prove one implication. 
Assume $F \in \A$.
Then $F$ admits an elementary reduction $E\circ F$, say with $E \in \El$:
$$E \circ F = \bmat{cf_1+cf_2P(f_2,f_4)}{f_2/d}{df_3+df_4P(f_2,f_4)}{f_4/c} \in \A_{<F}.$$

Let $R(x_2,x_4) = abP(bx_2,ax_4)$, and define 
$E' = \bmat{\frac{c}{a}x_1+\frac{c}{a}x_2R(x_2,x_4)}{\frac{b}{d}x_2}{\frac{d}{b}x_1+\frac{d}{b}x_2R(x_2,x_4)}{\frac{a}{c}x_4}.$

Then $E' \circ \mat{af_1}{f_2/b}{bf_3}{f_4/a} = E \circ F$, hence the result.\\

$F\in\A\Leftrightarrow(ii)$: This is just a special case of the previous equivalence with $a=b$.\\

$F\in\A\Leftrightarrow(iii)$: Using the equality 
$$
\mat{a}{0}{0}{\frac{1}{a}} \mat{0}{1}{-1}{0}=\mat{0}{a}{-\frac{1}{a}}{0}
$$ 
and $(ii)$ it suffices to restrict to the case $a=1$. If we denote $R\in\Aut(\SL_2(\C))$ the right-multiplication by $\mat{0}{1}{-1}{0}$ one easily checks that $R\circ \E\circ  R^{-1}=\E$ (actually conjugation under $R$ only exchanges $\El$ and $\Er$) and that $R$ does not affect the degree. It follows that sequences of elementary reductions of $F$ and of $R\circ F=\mat{f_1}{f_2}{f_3}{f_4}\mat{0}{1}{-1}{0}$ are in one to one correspondence.
 \end{proof}

We are back now with the setting of Proposition \ref{pro:main}.
Since $F \in \A$, there exists an elementary automorphism $E'$ such that $\bdeg E' \circ F < \bdeg F$ and $E' \circ F \in \A$ i.e. $E' \circ F \in \A_{<F}$.
Up to conjugacy, and using Lemma \ref{lem:inA}(i), we can assume 
$$E' = \mat{x_1+x_2P(x_2,x_4)}{x_2}{x_3+x_4P(x_2,x_4)}{x_4} \in \El.$$

We distinguish three cases according to the form of the automorphism $E$ in the proposition:
$E \in \El$, $E \in \Er$ or $E \in \Et$ (the case $E \in \Eb$ is equivalent to the latter one, up to conjugacy).

\subsection{Case $E \in \El$} \label{case:easy}

We have
$$E'\circ E^{-1}\circ(E\circ F)= E' \circ F\in\A_{<F}.$$ 
Since $E'\circ E^{-1}\in\El \subset \E$ we can use Induction Hypothesis  \ref{IH} to conclude.

\begin{rem}
This case is extremely simple, but in the following cases it will be convenient to use commutative diagrams, such as the one in Figure \ref{fig:easy}, to visualize the argument.
The vertices of the diagram correspond to tame automorphisms, and the arrows are either composition (on the left) by one elementary automorphism or a change of coefficients allowed by Lemma \ref{lem:inA} below. 
We distinguish automorphisms which are proven in the text to be in $\A_{<F}$, and Induction Hypothesis \ref{IH} means that for any arrow pointing on such an automorphism, the initial automorphism is in $\A$. 
\end{rem}

\begin{figure}[h]
$$\xymatrix{  
 F = \mat{f_1}{f_2}{f_3}{f_4} \ar[d]_<(.3){E'} \ar[r]^>(.8)E
 & E\circ F \ar[dl]^<(.2){E'\circ E^{-1}}\\
 E'\circ F \in \A_{<f}
}$$
\caption{Case $E\in\E^1_3$ }\label{fig:easy}
\end{figure}

For the next two more substantial cases we shall need the following lemma.

\begin{lem}
\label{lem:basic}
Let $F = \mat{f_1}{f_2}{f_3}{f_4} \in \Aut(\SL_2(\C))$.
If $E \in \El$ and $E\circ F =  \mat{f'_1}{f_2}{f'_3}{f_4}$, then
$$\bdeg E \circ F  \sphericalangle \bdeg F \Longleftrightarrow \bdeg f'_1 \sphericalangle \bdeg f_1  \Longleftrightarrow   \bdeg f'_3 \sphericalangle \bdeg f_3$$
for any relation $\sphericalangle\in\{<,>,\le,\ge,=\}$.
\end{lem}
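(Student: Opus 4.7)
The shape $E\circ F = \mat{f'_1}{f_2}{f'_3}{f_4}$ forces the scalars in the definition of an element of $\El$ to be trivial, so I may assume $E = \mat{x_1 + x_2 h(x_2,x_4)}{x_2}{x_3 + x_4 h(x_2,x_4)}{x_4}$ for some $h \in \C[x_2,x_4]$, whence
$$f'_1 = f_1 + f_2\, h(f_2,f_4), \qquad f'_3 = f_3 + f_4\, h(f_2,f_4).$$
If $h(f_2,f_4)=0$ there is nothing to prove, so assume the contrary. Since $f_2$ and $f_4$ are preserved, $\bdeg E\circ F = \bdeg f'_1 + \bdeg f_2 + \bdeg f'_3 + \bdeg f_4$, and because $(\N^3,+)$ with the graded lexicographic order is a cancellative totally ordered commutative monoid, the first equivalence of the lemma is a formal consequence of the second one
$$\bdeg f'_1 \sphericalangle \bdeg f_1 \;\Longleftrightarrow\; \bdeg f'_3 \sphericalangle \bdeg f_3,$$
which is what I concentrate on.

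The key algebraic input comes from the determinant relation $f_1 f_4 - f_2 f_3 = 1$ in $\C[\SL_2(\C)]$. Since $\bdeg 1 = 0$, subadditivity of $\bdeg$ forces $\bdeg f_1 f_4 = \bdeg f_2 f_3$, that is
$$\bdeg f_1 + \bdeg f_4 = \bdeg f_2 + \bdeg f_3,$$
and moreover the leading parts must cancel in $\C[x]/(q)$, giving
$$\hom{f_1}\,\hom{f_4} = \hom{f_2}\,\hom{f_3} \quad \text{in } \C[x_1,\dots,x_4]/(q).$$

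Set $\alpha := \bdeg\bigl(f_2 h(f_2,f_4)\bigr)$ and $\beta := \bdeg\bigl(f_4 h(f_2,f_4)\bigr)$. From the additivity of $\bdeg$ on products and the identity $\bdeg f_1 + \bdeg f_4 = \bdeg f_2 + \bdeg f_3$ I get $\alpha + \bdeg f_3 = \beta + \bdeg f_1$, so cancellation in the ordered monoid yields
$$\alpha \sphericalangle \bdeg f_1 \;\Longleftrightarrow\; \beta \sphericalangle \bdeg f_3.$$
If $\alpha > \bdeg f_1$, then $\bdeg f'_1 = \alpha$ and symmetrically $\bdeg f'_3 = \beta$, both strictly greater than $\bdeg f_1$ and $\bdeg f_3$ respectively. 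If $\alpha < \bdeg f_1$, no leading term is affected and both $\bdeg f'_1 = \bdeg f_1$ and $\bdeg f'_3 = \bdeg f_3$. The delicate case, which I expect to be the main obstacle, is $\alpha = \bdeg f_1$ (equivalently $\beta = \bdeg f_3$), where cancellation of leading parts may occur. Here $\hom{f'_1} = \hom{f_1} + \hom{f_2}\,\hom{h(f_2,f_4)}$ and $\hom{f'_3} = \hom{f_3} + \hom{f_4}\,\hom{h(f_2,f_4)}$ in $\C[x]/(q)$. Multiplying $\hom{f_1} + \hom{f_2}\,\hom{h(f_2,f_4)}$ by $\hom{f_4}$ and using $\hom{f_1}\hom{f_4} = \hom{f_2}\hom{f_3}$ gives $\hom{f_2}\bigl(\hom{f_3}+\hom{f_4}\,\hom{h(f_2,f_4)}\bigr)$; since $\C[x]/(q)$ is an integral domain and $\hom{f_2}\neq 0$, the two leading sums vanish simultaneously. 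This synchronization yields $\bdeg f'_1 < \bdeg f_1 \Leftrightarrow \bdeg f'_3 < \bdeg f_3$, and otherwise the equalities $\bdeg f'_1 = \bdeg f_1$ and $\bdeg f'_3 = \bdeg f_3$ hold together, completing the case analysis.
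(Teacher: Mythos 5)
Your proof is correct, but it takes a longer route than the paper, and the difference is instructive. Both arguments hinge on the same core fact: from $f_1f_4-f_2f_3=1$ and non-constancy of the $f_i$ one gets $\bdeg f_1+\bdeg f_4=\bdeg f_2+\bdeg f_3$. The paper, however, exploits that $E\circ F$ is \emph{itself} an automorphism with the same second and fourth entries, so the identical identity $\bdeg f_1'+\bdeg f_4=\bdeg f_2+\bdeg f_3'$ holds for it; subtracting the two identities gives $\bdeg f_1'-\bdeg f_1=\bdeg f_3'-\bdeg f_3=\tfrac12(\bdeg E\circ F-\bdeg F)$ in one line, with no need for the explicit form of $E$, no case analysis, and no leading-part computation. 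You instead use the determinant relation only for $F$, write out $f_1'=f_1+f_2h(f_2,f_4)$, $f_3'=f_3+f_4h(f_2,f_4)$, and split into cases according to $\bdeg(f_2h)$ versus $\bdeg f_1$; the only delicate case (possible simultaneous cancellation of leading parts) you handle correctly by multiplying by $\hom{f_4}$, using $\hom{f_1}\hom{f_4}=\hom{f_2}\hom{f_3}$ and integrality of $\C[x_1,\dots,x_4]/(q)$, which synchronizes the degree drops. This works, and your observation that the trichotomy class is preserved does yield all five relations, including the equivalence with $\bdeg E\circ F$ versus $\bdeg F$. Two small points of hygiene: like the paper you implicitly use that $f_2,f_4$ are nonzero and the $f_i$ non-constant (needed both for the leading-part cancellation in the determinant relation and for $\hom{f_2},\hom{f_4}\neq 0$), which deserves a word; and the equation $\hom{f_1'}=\hom{f_1}+\hom{f_2}\hom{h(f_2,f_4)}$ is only legitimate once you know the right-hand side is nonzero in $\C[x_1,\dots,x_4]/(q)$ --- what your argument really compares is the vanishing of the two sums $\hom{f_1}+\hom{f_2}\hom{h}$ and $\hom{f_3}+\hom{f_4}\hom{h}$, so it is better to phrase it that way. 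In short: correct, but you rediscover by hand what the paper gets for free by applying the degree identity to $E\circ F$ as well.
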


\begin{proof} One has $f_1f_4-f_2f_3=1$ and the $f_i$'s are not constant hence the leading parts must cancel one another: 
$\hom{f_1}\hom{f_4}-\hom{f_2}\hom{f_3}=0$. It follows: $\deg f_1+\deg f_4=\deg f_2+\deg f_3$ and then 
$$\frac{1}{2}\deg F   =  \deg f_1+\deg f_4   =  \deg f_2+\deg f_3 .$$
Similarly
$$\frac{1}{2}\deg E\circ F  =  \deg f'_1+\deg f_4  =  \deg f_2+\deg f_3',$$
hence
$$\frac{1}{2}(\deg E\circ F-\deg F)  =  \deg f_1'-\deg f_1  =  \deg f_3'-\deg f_3 $$
which gives the desired equivalences.
\end{proof}

\subsection{Case $E \in \Er$}

Using Lemma \ref{lem:inA}(i), we can assume that $E = \mat{x_1}{x_2+x_1Q(x_1,x_3)}{x_3}{x_4+x_3Q(x_1,x_3)}$ so we have
$$
E'\circ F=\mat{f_1+f_2P(f_2,f_4)}{f_2}{f_3+f_4P(f_2,f_4)}{f_4}\mbox{ and }E\circ F=\mat{f_1}{f_2+f_1Q(f_1,f_3)}{f_3}{f_4+f_3Q(f_1,f_3)}\; .
$$

If $P(f_2,f_4)$ is not constant, the inequality $\bdeg E' \circ F < \bdeg F$ implies $\bdeg f_1 > \bdeg f_2$ and $\bdeg f_3 > \bdeg f_4$.
But then $\bdeg E \circ F > \bdeg F$, a contradiction. 

Hence $P(f_2,f_4) = p$ is a constant, and $\bdeg f_1 = \bdeg f_2$, $\bdeg f_3 = \bdeg f_4$.
This in turn implies $Q(x_1,x_3) = q$ is a constant.\\

If $pq \neq 1$, we define $r = \frac{q}{1-pq}$, $s = -\frac{p}{1+rp}$ and we compute
\begin{equation}\label{eq:prs}
\begin{split}
\bmat{1}{0}{p}{1} \bmat{1}{r}{0}{1} \bmat{1}{0}{s}{1} 
&= \bmat{\frac{1}{1+rp}}{r}{0}{1+rp} \\
&= \bmat{1}{q}{0}{1} \bmat{\frac{1}{1+rp}}{0}{0}{1+rp} 
\end{split}
\end{equation}

By assumption, $\mat{f_1}{f_2}{f_3}{f_4} \mat{1}{0}{p}{1} \in \A_{<F}$. 
By Induction Hypothesis \ref{IH}, and since, here,  the multiplication by $\mat{1}{r}{0}{1}$ does not change the degree (the second column is added a scalar multiple of the first one which has a strictly smaller degree), we have   
$$\bmat{f_1}{f_2}{f_3}{f_4} \bmat{1}{0}{p}{1} \bmat{1}{r}{0}{1} \in \A_{<F}.$$
Using the Induction Hypothesis again, we get  $\mat{f_1}{f_2}{f_3}{f_4} \mat{1}{0}{p}{1} \mat{1}{r}{0}{1} \mat{1}{0}{s}{1} \in \A$. 
Thus by (\ref{eq:prs}) we have  
$\mat{f_1}{f_2}{f_3}{f_4} \mat{1}{q}{0}{1} \mat{\frac{1}{1+rp}}{0}{0}{1+rp} \in \A$, and using Lemma \ref{lem:inA}$(ii)$ we obtain (see Figure~\ref{fig:pqnot1})
$$ \bmat{f_1}{f_2}{f_3}{f_4} \bmat{1}{q}{0}{1} = E \circ F \in \A.$$

\begin{figure}[h]
$$\xymatrix{  
 F = \mat{f_1}{f_2}{f_3}{f_4} 
\ar[d]_{ \cdot \mat{1}{0}{p}{1} } \ar[rr]^{ \cdot \mat{1}{q}{0}{1} }
&& E \circ F 
\ar[rr]^{ \cdot \mat{\frac{1}{1+rp}}{0}{0}{1+rp} } 
&& \in \A  \\
\in \A_{<F} \ar[rrrr]_{ \cdot \mat{1}{r}{0}{1} }
&&&& \in \A_{<f} \ar[u]_{ \cdot \mat{1}{0}{s}{1} }
}$$
\caption{Case $E\in\Er$, $pq \neq 1$}\label{fig:pqnot1}
\end{figure}

If $pq = 1$, we write 
$$\bmat{1}{q}{0}{1} = \bmat{1}{0}{p}{1} \bmat{1}{-1/p}{0}{1}  \bmat{0}{1/p}{-p}{0}.$$
By Induction Hypothesis \ref{IH} we have $\bmat{f_1}{f_2}{f_3}{f_4} \bmat{1}{0}{p}{1} \bmat{1}{-1/p}{0}{1} \in \A$, and using Lemma \ref{lem:inA}$(iii)$ we obtain (see Figure \ref{fig:pq=1})
$$ \bmat{f_1}{f_2}{f_3}{f_4} \bmat{1}{q}{0}{1} = E \circ F \in \A\; .$$

\begin{figure}[h]
$$\xymatrix{  
 F = \mat{f_1}{f_2}{f_3}{f_4} 
\ar[drr]_{ \cdot \mat{1}{0}{p}{1} } \ar[rr]^{ \cdot \mat{1}{q}{0}{1} }
&& E \circ F  
&& \in \A \ar[ll]_{ \cdot \mat{0}{1/p}{-p}{0} } \\
&&\in \A_{<F} \ar[urr]_{ \cdot \mat{1}{-1/p}{0}{1} }
}$$
\caption{Case $E\in\Er$, $pq = 1$}\label{fig:pq=1}
\end{figure}

\subsection{Case $E \in \Et$}

Using Lemma \ref{lem:inA}$(i)$, we can assume that $ E = \mat{x_1+x_3Q(x_3,x_4)}{x_2+x_4Q(x_3,x_4)}{x_3}{x_4}$
so we have
$$
E'\circ F=\mat{f_1+f_2P(f_2,f_4)}{f_2}{f_3+f_4P(f_2,f_4)}{f_4}\mbox{ and }E\circ F=\mat{f_1+f_3Q(f_3,f_4)}{f_2+f_4Q(f_3,f_4)}{f_3}{f_4} \; .
$$

Assume first that Minoration \ref{mino:para} is applicable to both $P(f_2,f_4)$ and $Q(f_3,f_4)$. We obtain the contradictory sequence of inequalities:
\begin{align*}
\bdeg f_2 &< \bdeg (f_4P(f_2,f_4)) && (\mbox{Minoration \ref{mino:para} applied to } P), \\
\bdeg (f_4P(f_2,f_4)) &= \bdeg f_3 &&  (\bdeg E' \circ F < \bdeg F), \\
\bdeg f_3 &< \bdeg (f_4Q(f_3,f_4)) && (\mbox{Minoration \ref{mino:para} applied to } Q), \\
\bdeg (f_4Q(f_3,f_4)) &\le \bdeg f_2 && (\bdeg E \circ F \le \bdeg F). 
\end{align*}

We conclude with the following lemma.

\begin{lem}
\label{lem:conditions}
If Minoration \ref{mino:para} is not applicable to either $P(f_2,f_4)$ or $Q(f_3,f_4)$, i.e. if 
$$Q(f_3,f_4) \in \C[f_4],\; \hom{f_2} \in \C[\hom{f_4}],\;
P(f_2,f_4) \in \C[f_4] \text{ or } \hom{f_3} \in \C[\hom{f_4}],$$
then $E \circ F \in \A$.
\end{lem}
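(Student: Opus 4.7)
The plan is to prove the lemma by a case analysis on the four hypotheses, labelled
(a) $Q(f_3,f_4)\in\C[f_4]$,
(b) $\hom{f_2}\in\C[\hom{f_4}]$,
(c) $P(f_2,f_4)\in\C[f_4]$, and
(d) $\hom{f_3}\in\C[\hom{f_4}]$.
The common strategy in each case will be to exhibit an elementary $\tilde E\in\E$ such that $\tilde E\circ E\circ F\in\A_{<F}$; by Induction Hypothesis \ref{IH}, this yields $E\circ F\in\A$.

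First I would reduce the number of cases. Since any three components of an automorphism of $\SL_2(\C)$ are algebraically independent (the coordinate ring has transcendence degree $3$), (a) and (c) rephrase as $Q(x_3,x_4)=q(x_4)$ and $P(x_2,x_4)=p(x_4)$ for univariate polynomials. In (c), the cancellation forced by $\bdeg E'\circ F<\bdeg F$ combined with Lemma \ref{lem:basic} yields $\hom{f_3}=-a_d\hom{f_4}^{d+1}$, where $d=\deg p$ and $a_d$ is the leading coefficient of $p$; hence (c) is subsumed by (d). Symmetrically, (a) together with the extra assumption $\bdeg E\circ F<\bdeg F$ yields $\hom{f_2}\in\C[\hom{f_4}]$, i.e.\ (b). Thus it suffices to treat (b), (d), and the residual sub-case of (a) in which $\bdeg E\circ F=\bdeg F$.

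For this residual sub-case, my approach is to use the commutation relation $E'\circ E=R_{\tilde M}\circ E\circ E'$, where $R_{\tilde M}\in\El$ is the right multiplication by $\mat{1}{0}{\tilde M(x_2,x_4)}{1}$ with $\tilde M(z_2,z_4):=P(z_2,z_4)-P(z_2-z_4\,q(z_4),z_4)\in\C[z_2,z_4]$. Since $E'\circ F\in\A_{<F}$, the Induction Hypothesis first gives $E\circ E'\circ F\in\A$; a direct degree estimate (using that $\deg(f_4\,q(f_4))<\deg f_2$, which is forced by the negation of (b) together with the homogeneity structure of $\C[x_1,\dots,x_4]/(q)$) shows $\bdeg E\circ E'\circ F<\bdeg F$, and one more application of the Induction Hypothesis to $R_{\tilde M}\in\El$ gives $E'\circ E\circ F\in\A_{<F}$, whence $E\circ F\in\A$. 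For (b) and (d), the monomial relation $\hom{f_i}=c\,\hom{f_4}^k$ together with $\hom{f_1}\hom{f_4}=\hom{f_2}\hom{f_3}$ in $\C[x_1,\dots,x_4]/(q)$ fixes the leading parts of all $f_i$ up to scalars, from which one reads off an explicit $\tilde E\in\Et$ producing the desired cancellation on the top row of $E\circ F$.

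The hardest step is the degree estimate in the residual sub-case of (a): one must rule out, using the negation of (b), the ``degenerate'' sub-sub-case in which $\deg(f_4\,q(f_4))=\deg f_2$ could spoil the cancellation pattern after substituting the perturbed argument $f_2+f_4\,q(f_4)$ into $P$; I expect this to rely on the primality of $(q)$ in $\C[x_1,\dots,x_4]$ and on the rigidity of the weight spaces, which forces $\hom{f_2}$ to be a scalar multiple of $\hom{f_4}^{d+1}$ precisely in the configurations excluded by the standing hypothesis.
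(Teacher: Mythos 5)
Your two preliminary reductions are fine ((c) together with $\bdeg E'\circ F<\bdeg F$ does force $\hom{f_3}\in\C[\hom{f_4}]$, and (a) with a strict drop forces (b)), and your commutation relation $E'\circ E=R_{\tilde M}\circ E\circ E'$ with $\tilde M(z_2,z_4)=P(z_2,z_4)-P(z_2-z_4Q(z_4),z_4)$ is correct. But the reorganization makes (b) and (d) the load-bearing base cases, and for these you offer only a sketch that contains a false claim: from $\hom{f_3}\in\C[\hom{f_4}]$ and $\hom{f_1}\hom{f_4}=\hom{f_2}\hom{f_3}$ the leading parts of all the $f_i$ are \emph{not} determined up to scalars ($\hom{f_2}$ and $\hom{f_4}$ remain arbitrary). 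More importantly, even if you produce a $\tilde E\in\E$ lowering $\bdeg(E\circ F)$, that is not enough: to conclude $E\circ F\in\A$ via Induction Hypothesis \ref{IH} you must show $\tilde E\circ E\circ F\in\A_{<F}$, i.e.\ prove membership in $\A$, and this is exactly where the paper's work lies (the conjugations $E\circ E'\circ E^{-1}$, resp.\ $E'\circ E\circ {E'}^{-1}$, the comparison of unchanged coordinates via Lemma \ref{lem:basic}, and the auxiliary $\tilde E$ reducing cases (ii),(iv) to (i),(iii)). Nothing in your outline supplies that membership.

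In the residual sub-case of (a) the gap is sharper. From $E\circ E'\circ F\in\A_{<F}$ (which indeed holds: $\deg(f_2+f_4Q(f_4))\le\deg f_2$ and the third coordinate has dropped, no extra inequality needed) the Induction Hypothesis only gives $E'\circ E\circ F=R_{\tilde M}\circ(E\circ E'\circ F)\in\A$, not $\in\A_{<F}$. To finish you still need $\deg(E'\circ E\circ F)<\deg F$, i.e.\ $\deg\bigl(f_3+f_4P(f_2+f_4Q(f_4),f_4)\bigr)<\deg f_3$, and this is the real difficulty, not the inequality $\deg(f_4Q(f_4))<\deg f_2$ you flag (that one is salvageable: the $\wght$-graded pieces of $\C[x_1,\dots,x_4]/(q)$ are one-dimensional, so equal degrees would make $\hom{f_2}$ a scalar multiple of a power of $\hom{f_4}$, i.e.\ case (b)). Even granting it, the difference $P(f_2+f_4Q(f_4),f_4)-P(f_2,f_4)$ is only bounded by $\ged P-\deg f_2+\deg(f_4Q(f_4))$, which need not be below $\deg P(f_2,f_4)$ when the latter drops under $\ged P$; such a drop occurs whenever $\hom{f_2}^{s_2}=\lambda\hom{f_4}^{s_4}$ with $s_2\ge 2$ and $P\gen$ in the corresponding ideal, a configuration not excluded by the negation of (b). The paper sidesteps this entirely by using the conjugation in the other direction: $E''=E\circ E'\circ E^{-1}$ is elementary with polynomial $P(x_2-x_4Q(x_4),x_4)$, so applied to $E\circ F$ (whose second coordinate is $f_2+f_4Q(f_4)$) it reproduces \emph{exactly} $f_3+f_4P(f_2,f_4)$ in the third slot, and Lemma \ref{lem:basic} concludes with no estimate at all; you should recast your argument along those lines.
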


\begin{proof}
(i) Assume $Q(f_3,f_4)=Q(f_4)\in\C[f_4]$ (see Figure \ref{fig:Q}).

 Then one checks: $E'':=E\circ E'\circ E^{-1} \in \E^1_3$.
Using Induction Hypothesis \ref{IH} we get $E\circ E' \circ F \in \A$ and, applying it once again in order to get $E \circ F\in \A$, we are left to prove that $\deg  E\circ E' \circ F<\deg F$. For this, we remark that $F$ and $E\circ F$ resp. $E'\circ F$ and $E''\circ E\circ F$ have the same 3rd component: $f_3$ resp. $f_3':=f_3+f_4P(f_2,f_4)$. By Lemma \ref{lem:basic}, the assumption $\deg E'\circ F<\deg F$ translates in $\deg f_3'<\deg f_3$ but this in turn translates   in $\deg E''\circ E\circ F<\deg E\circ F$ and we are done since, by assumption,  $\deg E\circ F\leq\deg F$.
  \\

\begin{figure}[h]
$$\xymatrix{  
 F = \mat{f_1}{f_2}{f_3}{f_4} \ar[d]_<(.3){E'} \ar[dr]^>(.8)E \\
 E'\circ F  \in \A_{<F} \ar[dr]_E & E\circ F  \ar[d]^<(.2){E''}\\
 &   \in \A_{<F}
}
\qquad
\xymatrix{  
F = \mat{f_1}{f_2}{f_3}{f_4} \ar[d]_<(.3){E'} \ar[dr]^>(.8){\tilde E} \ar[rr]^E
&& E \circ F\\
E'\circ F \in \A_{<F} \ar[dr]_{\tilde E} & \tilde E\circ F \in \A_{<F} \ar[d]^<(.2){\tilde{E}''} \ar[ur]^<(.2){E''}\\
 & \in \A_{<F}
}
$$
\caption{Cases (i) $Q \in \C[f_4]$, and (ii) $\hom{f_2} \in \C[\hom{f_4}]$ in Lemma \ref{lem:conditions}.}\label{fig:Q}
\end{figure}

(ii) Assume $\hom{f_2} \in \C[\hom{f_4}]$ (see Figure \ref{fig:Q}).

Then there exists $\tilde Q(f_4)\in\C[f_4]$ such that $\bdeg f_2+f_4\tilde Q(f_4)<\bdeg f_2$.
We take $\tilde E =\mat{x_1+x_3\tilde Q(x_4)}{x_2+x_4\tilde Q(x_4)}{x_3}{x_4}$, and we have  $\tilde E\circ F\in\A$ by case (i). 
Thus $\tilde E\circ F\in\A_{<F}$. 
We conclude using Induction Hypothesis \ref{IH} on $E''\circ (E\circ F)$ where $E'' =E\circ \tilde E^{-1}\in\E^{12}$. \\

(iii) Assume $P(f_2,f_4)=P(f_4) \in \C[f_4]$ (see Figure \ref{fig:P}).

Consider $E'' := E'\circ E\circ {E'}^{-1} \in \Et$.
First by Induction Hypothesis \ref{IH} we have $E''\circ E' \circ F \in \A$. 
If we can prove $\deg E''\circ E' \circ F < \deg F$ then we can use the Induction Hypothesis again to obtain $E \circ F \in \A$. 
But this is done as follows, using a similar argument as in case (i).
We note that $f_2$ is the second coordinate of both $F$ and $E' \circ F$, and $f_2 + f_4Q(f_3,f_4)$ is the second coordinate of both $E\circ F$ and $E''\circ E' \circ F$.
By Lemma \ref{lem:basic} the assumption $\deg E \circ F \le \deg F$ is equivalent to $\deg f_2 + f_4Q(f_3,f_4) \le \deg f_2$, which in turn in equivalent to $\deg E''\circ E' \circ F \le \deg E' \circ F$. This gives the result, since $ \deg E' \circ F < \deg F$.\\

(iv) Finally assume $\hom{f_3} \in \C[\hom{f_4}]$ (see Figure \ref{fig:P}).

There exists $\tilde P(f_4)\in\C[f_4]$ such that $\bdeg f_3+f_4\tilde P(f_4)<\bdeg f_3$.
We take $\tilde E =\mat{x_1+x_2\tilde P(x_4)}{x_2}{x_3+x_4\tilde P(x_4)}{x_4}$, and we have  $\tilde E\circ F\in\A$ by Case \ref{case:easy}. 
Thus $\tilde E\circ F\in\A_{<F}$. 
We conclude using case (iii).

\begin{figure}[h]
$$\xymatrix{  
 F = \mat{f_1}{f_2}{f_3}{f_4} \ar[d]_{E'} \ar[r]^>(.8){E} 
& E\circ F \ar[d]^{E'} \\
 E'\circ F \in \A_{<F} \ar[r]_>(.8){E''}  & \in \A_{<F}
}
\qquad
\xymatrix{  
& F = \mat{f_1}{f_2}{f_3}{f_4} \ar[dl]_<(.3){E'} \ar[d]^{\tilde E} \ar[r]^>(.8)E
& E \circ F \ar[d]^{\tilde E}\\
E'\circ F \in \A_{<F} \ar[r]_{\tilde E \circ {E'}^{-1}} & \tilde E\circ F \in \A_{<F} \ar[r]_>(.8){\tilde{E}''}  & \in \A_{<F}
}
$$
\caption{Cases (iii) $P \in \C[f_4]$, and (iv) $\hom{f_3} \in \C[\hom{f_4}]$ in Lemma \ref{lem:conditions}.}\label{fig:P}
\end{figure}

\end{proof}

%

\section{Examples of wild automorphisms} \label{sec:wild}

\subsection{The case of $\SL_2(\C)$}

We consider automorphisms $\sigma_n$ of the form $\exp ((x_1+x_4)^n \partial)$, where $\partial$ is the locally nilpotent derivation that we introduced in \S \ref{intro:tame}.
We have
$$\sigma_n = 
\begin{pmatrix} x_1 -x_2(x_1+x_4)^n & x_2 \\ 
x_3 + (x_1-x_4)(x_1+x_4)^n -x_2(x_1+x_4)^{2n} & x_4 + x_2(x_1+x_4)^n \end{pmatrix}$$
The automorphism $\sigma$ of the introduction corresponds to $n = 1$.

Now assume that $\sigma_n$ admits an elementary reduction $E \circ \sigma_n$. 
Since the degree of the second coordinate cannot decrease, by Lemma \ref{lem:basic} we see that $E \in \El$ or $\Eb$.

Since both cases are symmetrical, we only consider the former one. 
By Lemma \ref{lem:inA} we can assume that
$$E = \bmat{x_1+x_2P(x_2,x_4)}{x_2}{x_3+x_4P(x_2,x_4)}{x_4}.$$
 
Computing the leading parts of the coordinates of $\sigma_n$, which are $\mat{-x_2x_4^n}{x_2}{-x_2x_4^{2n}}{x_2x_4^n}$, we see that  
$$-x_2x_4^{n} + x_2P(x_2,x_2x_4^n) = 0.$$
This implies that $x_4^n$ is in $\C[x_2,x_2x_4^n]$, which is contradictory. 
Hence $\sigma_n$ does not admit an elementary reduction, and by Theorem \ref{thm:main} we conclude that $\sigma_n$ is not a tame automorphism.

\subsection{The case of $\PSL_2(\C)$}

One can adapt the discussion of this note to the case of the automorphism group of the complement of a smooth quadric surface in $\cpt$; in other words to the context of $\Aut(\PSL_2(\C))$.
Consider the double cover 
$$\pi\colon \SL_2(\C) \to \PSL_2(\C) = \SL_2(\C)/\langle -\id \rangle.$$
Clearly if $f \in \Aut(\SL_2(\C))$ commutes with $-\id$ then it induces an automorphism $F \in \Aut(\SL_2(\C))$ such that $\pi \circ f = F \circ \pi$.
The following observation was pointed to us by J\'er\'emy Blanc:

\begin{lem}
Let $F$ be an automorphim of $\PSL_2(\C)$.
Then there exists $f \in \Aut(\SL_2(\C))$ such that $\pi \circ f = F \circ \pi$.
\end{lem}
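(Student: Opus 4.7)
The plan is to use that $\SL_2(\C)$ is simply connected (topologically, it deformation-retracts onto $\SU(2) \simeq S^3$), which will imply that $F \circ \pi\colon \SL_2(\C) \to \PSL_2(\C)$ lifts through the double cover $\pi$.

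Explicitly, I would form the fibered product $Y := \SL_2(\C) \times_{\PSL_2(\C)} \SL_2(\C)$, where the left factor maps via $F\circ \pi$ and the right factor via $\pi$, with projections $p_1, p_2\colon Y \to \SL_2(\C)$ satisfying $\pi \circ p_2 = F \circ \pi \circ p_1$. The first projection $p_1$ is an \'etale degree-two cover of $\SL_2(\C)$, being a pullback of $\pi$; since $\SL_2(\C)$ has trivial fundamental group, such a cover must split, so $Y \simeq \SL_2(\C) \sqcup \SL_2(\C)$. Picking either component yields a section $s\colon \SL_2(\C) \to Y$ of $p_1$, and setting $f := p_2 \circ s$ produces the desired morphism $f\colon \SL_2(\C) \to \SL_2(\C)$ with $\pi \circ f = F \circ \pi$.

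To check that $f$ is an automorphism, I would apply the same construction to $F^{-1}$ to obtain $g\colon \SL_2(\C) \to \SL_2(\C)$ with $\pi \circ g = F^{-1} \circ \pi$. Then $\pi \circ (g \circ f) = \pi$, so $g \circ f$ is a morphism over $\PSL_2(\C)$, that is, a deck transformation of the double cover $\pi$; it is therefore either $\id$ or the Galois involution $A \mapsto -A$, both of which are automorphisms of $\SL_2(\C)$. The same argument for $f \circ g$ shows that $f$ itself is an automorphism.

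The only non-trivial input is the simple connectedness of $\SL_2(\C)$, which is classical and poses no real obstacle. A purely algebraic alternative would compute $\mathrm{Pic}(\PSL_2(\C)) \simeq \Z/2\Z$ via the excision sequence $\Z \cdot [Q] \to \mathrm{Pic}(\cpt) \to \mathrm{Pic}(\PSL_2(\C)) \to 0$ for the complement of the smooth quadric $Q$ (of class $2H$) in $\cpt$, observe that $F^*$ acts trivially on this cyclic group of order two (its only automorphism being the identity), and conclude $F^*\pi \simeq \pi$ as double covers, thereby recovering the lift $f$.
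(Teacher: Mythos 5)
Your argument is correct, but it follows a genuinely different route from the paper. You treat $\pi\colon \SL_2(\C)\to\PSL_2(\C)$ as a finite \'etale double cover, pull it back along $F\circ\pi$, and invoke the simple connectedness of $\SL_2(\C)$ to split the resulting degree-two cover, obtaining the lift $f$ abstractly; the automorphism property then follows from the deck-transformation argument applied to $g\circ f$ and $f\circ g$ (note that the step ``$\pi\circ h=\pi$ implies $h=\pm\id$'' uses connectedness of $\SL_2(\C)$, since $A\mapsto h(A)A^{-1}$ is a morphism into the finite set $\{\pm\id\}$ -- worth saying explicitly). The paper instead argues explicitly and algebraically: it writes $F$ on $\PSL_2(\C)=\cpt\smallsetminus\{q=0\}$ as four homogeneous polynomials of equal degree, shows that $q\circ F$ must be a constant times a power of $q$ (in the non-linear case because any contracted divisor is supported on the quadric), normalizes so that $q\circ F=q^k$, does the same for $F^{-1}$, and checks that the resulting endomorphisms of $\Af^4$ restrict to mutually inverse automorphisms of $\{q=1\}$ after adjusting a sign by a square root of $-1$. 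What your approach buys is brevity and generality (any automorphism of the base lifts through a finite \'etale cover with simply connected, connected total space), at the cost of being non-constructive and of importing the topological input $\pi_1(\SL_2(\C))=1$ together with the comparison between algebraic and topological covers; the paper's computation is longer but purely algebraic and yields the explicit polynomial form of the lift, which is what makes the subsequent concrete description of the elementary automorphisms of $\PSL_2(\C)$ (with denominators powers of $x_1x_4-x_2x_3$) immediate. Your sketched Picard-group alternative is also viable, though to conclude $F^*\pi\simeq\pi$ as covers one should either note that units on $\PSL_2(\C)$ are constants (so double covers are classified by the two-torsion of the Picard group) or simply use uniqueness of the connected double cover.
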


\begin{proof}
An automorphism $F$ of $\PSL_2(\C) = \cpt \smallsetminus \{ q = 0 \}$ is given by four homogeneous polynomial $f_i$ of the same degree. 
If $F$ is linear, the surface $ \{ q = 0 \}$ is preserved by $F$; and in the non-linear case the locus $q \circ F = 0$ corresponds to divisors in $\cpt$ contracted by $F$, which must be supported on $q = 0$. 
In both cases we obtain that the polynomial $q\circ F \in \C[x_1,\dots,x_4]$ is a power of $q$, up to a constant. 
Multiplying the $f_i$ by a constant, we can thus assume $q \circ F = q^k$.
The same remark applies to the four homogeneous polynomial $g_i$ associated with $F^{-1}$. 
Thus $f = (f_1, \dots, f_4)$ anf $g = (g_1, \dots, g_4)$, viewed now as endomorphisms of $\Af^4$, preserve the level $q = 1$, or in other words $\SL_2(\C) \subset \Af^4$.
We have
$$ f \circ g = (H x_1, \dots, H x_4)$$
where $H = c q^n$ is a power of $q$ up to a constant $c$ satisfying $q(cx_1, \dots, cx_4) = q(x_1, \dots, x_4)$. Hence $c = \pm 1$. 
Multiplying if necessary the $f_i$ by a square root of $-1$ (but not touching the $g_i$), we can remove the sign and obtain an automorphism $f$ on $\SL_2(\C)$, with inverse $g$, and which by definition satisfies $\pi \circ f = F \circ \pi$.
\end{proof}
 
Note that the automorphism $f$ given by the proposition commutes with $-\id$ and is uniquely defined up to a sign. 

Now we can define for instance the group $\Eb \subset \PSL_2(\C)$ as the group of automorphisms $F$ such that there exists $f \in \Eb \subset \SL_2(\C)$ satisfying $\pi \circ f = F \circ \pi$. 
Explicitely these are automorphisms of the form
\begin{equation*}
\begin{pmatrix} x_1/a & x_2/b \\ 
bx_3 + bx_1\frac{h(x_1,x_2)}{(x_1x_4 - x_2x_3)^n} & ax_4 + ax_2\frac{h(x_1,x_2)}{(x_1x_4 - x_2x_3)^n} \end{pmatrix}
\end{equation*}
where $h(x_1,x_2)$ is a homogeneous polynomial of (ordinary) degree $2n$.

Other types of elementary automorphisms are defined in a similar way.
Thus we obtain a tame group and deduce from the discussion above that for instance

$$\begin{pmatrix} x_1 -x_2\frac{(x_1+x_4)^2}{x_1x_4 - x_2x_3} & x_2 \\ 
x_3 + (x_1-x_4)\frac{(x_1+x_4)^2}{x_1x_4 - x_2x_3} -x_2\frac{(x_1+x_4)^4}{(x_1x_4 - x_2x_3)^2} & x_4 + x_2\frac{(x_1+x_4)^2}{x_1x_4 - x_2x_3} \end{pmatrix}$$
which is the push-forward by $\pi$ of the automorphisms $\sigma_2$ of the previous paragraph, is not tame.


\bibliographystyle{alpha}
\bibliography{biblio}

\end{document}